\numberwithin{theorem}{section}
\newcommand{\TheTitle}{A Tale of Two Limits: An Extremal Pagerank Problem} 
\newcommand{\TheAuthors}{J. Farnan and F. Kenter }
\headers{\TheTitle}{\TheAuthors}
\title{{\TheTitle}\thanks{Submitted to the editors April 8, 2021.
\funding{This work was funded in part by the NSF-DMS-1720225}}}
\title{A Tale of Two Limits, An Extremal Pagerank Problem}
\author{ and Franklin H. J. Kenter}
\date{\today} 
\author{
  Joseph Farnan 
\thanks{United States Naval Academy, Annapolis, MD,    \email{josephterencefarnan@gmail.com}}
  \and
  Franklin H. J. Kenter \thanks{United States Naval Academy, Annalpols, MD, \email{kenter@usna.edu}.}
}
\begin{document}

\maketitle

\def\P{{\mathbf{P}}}% \P == \mathbb{P}
\def\N{{\mathbb{N}}}% \N == \mathbb{N}
\def\Z{{\mathbb{Z}}}% \Z == \mathbb{Z}
\def\Q{{\mathbb{Q}}}% \Q == \mathbb{Q}
\def\K{{\mathbb{K}}}% \K == \mathbb{K}
\def\R{{\mathbb{R}}}% \R == \mathbb{R}
\def\C{{\mathbb{C}}}% \C == \mathbb{C}

\newcommand{\bfz}{{\bf z}}

\numberwithin{equation}{section}

\newcommand{\owari}{\hfill$\square$}

\theoremstyle{break}

% %\theoremstyle{plain}
% \newtheorem{theorem}{Theorem}
% %\newtheorem{lemma}[thm]{Lemma}

% %\newtheorem{theorem}{Theorem}[section]
 \newtheorem{problem}[theorem]{Problem}
 \newtheorem{fact}[theorem]{Fact}
 \newtheorem{conjecture}[theorem]{Conjecture}

% stationary distributions are \pi.

%\pagestyle{plain}

%%%%%%    TEXT START

\begin{abstract}
For a directed graph, the Pagerank algorithm emulates a random walker on the graph that occasionally ``jumps'' to a random vertex based on a jumping parameter $\alpha$. Upon completion, the algorithm generates a stochastic vector whose entries correspond to the limiting probability that the walker will be at that vertex. This vector is a right eigenvector of a corresponding Markov trasition matrix. Undoubtedly, this vector can drastically change based upon the jumping parameter $\alpha$.

In this article, we investigate the maximum possible discrepancy for different Pagerank vectors on the same unweighted directed (perhaps with loops) graph as measured by the 2-norm. We show that the limsup of this discrepancy can be as large as $\sqrt{\frac{67}{50}}$ using a very specific construction. (For contrast, the norm of the difference for any two stochastic vectors is at most $\sqrt{2}$.) Interestingly, on this construction this discrepancy occurs when $\alpha = 1$ and when $\alpha$ is very close to 1.
\end{abstract}

\begin{keywords}
  directed graphs, Pagerank algorithm, spectral graph theory, extremal spectral graph theory
\end{keywords}

\begin{AMS}
  05C35, 05C50, 05C81
\end{AMS}

\section{Introduction} \label{Pagerank Introduction}

\subsection{Overview}

One way to rank the relative importance of vertices on a directed graph (network) is to use the Pagerank algorithm. The Pagerank algorithm was developed by Page and Brin to address the conundrum between quantity versus quality in the context of webpages \cite{pagerank}, as a webpage could be considered influential either because it is linked to by many sites or by a few, high-quality sites. For instance, many space-enthusiasts may link to a NASA, indicating its authority. In turn, NASA may link to a revered astrophysicist on its own, indicating its endorsement.

Pagerank and similar network algorithms have been developed for other contexts. In the of a sports league, the graph could have teams be the vertices and the directed edges represent a team losing against the other team. Previous work has utilized this idea of to rank NCAA College Football teams where it is often the case that some teams perform very well but with weaker opponents  \cite{callaghan2007random}. In social media, the graph would have users of vertices and the directed edges represent one user following or mentioning one another. In  \cite{kwak2010twitter}, the PageRank algorithm is used to provide an alternative measure of influence of Twitter users beyond simply counting the number of retweets. That is, who retweets our message is just as important as how many times it is retweeted.

%describe the page rank algorithm both colloqiually and mathematically
Consider a directed graph, $G$. For our purposes and throughout, $G$ may have loops and bidirectional edges (i.e., there is an arc from $i$ to $j$ and another arc from $j$ to $i$). However, there will be no duplicate arcs (i.e., there is at most one arc from $i$ to $j$). The Pagerank algorithm produces a vector of length $n$ (the number of vertices in $G$), denoted $\pi$, where each entry measures on the proportion of the time we would encounter each vertex by simply wandering around the graph randomly and occasionally jump around. Hence, the Pagerank vector is a stochastic vector. %The more often one encounters a vertex during the random process, the more important or influential of that vertex is, the higher the Pagerank that vertex has.

To begin calculating the Pagerank vector, consider a directed graph $G$ and choose an $\alpha$ to be your {\it jumping parameter}, $0< \alpha <1$.
The parameter $\alpha$ will determine the proportion of the time we will follow an arc in our random process. Intuitively, $\alpha$ balances the impact of arcs from high-quality vertices and high quantities of arcs. %Pagerank algorithm requires the selection of an alpha value which is usually used to control the computational time required.
Once $\alpha$ is chosen, randomly pick a vertex of $G$ uniformly at random to start, denoted $v$. Then, with probability $\alpha$, choose an out-neighbor of $v$, uniformly at random, and walk to that vertex; and with probability $1-\alpha$, jump to an another random vertex chosen uniformly at random. If $v$ has no out neighbors, then simply jump to a random vertex, chosen uniformly at random. Take the vertex we are on and declare that the new $v$ and repeat. The Pagerank vector is the limiting probability of finding ourselves at each vertex after an arbitrarily large number of steps. Intuitively, the more often we visit a vertex, the more important it is and the higher the (Page)rank it has!

For a given directed graph $G$ and $\alpha$, we define the Pagerank vector with jumping parameter $\alpha$
\[ (\pi_\alpha)_i = \lim_{t \to \infty} P(\text{being at vertex }i\text{ at step }t)\] for each vertex $i$.

The Pagerank can be interpreted using linear algebra in the context of Markov chains where the probability transition matrix for a directed graph $G$, $\mathbf{R}$, is given by
%define deg_out
\[ \mathbf{R}_{ij} = \begin{cases}
\frac{\alpha}{{deg}^{out}_j} + \frac{1-\alpha}{n} ~ \text{ if }~ {deg}^{out}_j \ne 0 \\
\frac{1}{n} ~ \text{otherwise}
\end{cases}   \] 
Where $deg_{j}^{out}$ is the number of out neighbors of j.
In which case, the Pagerank vector $\pi$ is a right eigenvector of $\mathbf{R}$ with eigenvalue 1
\[ \mathbf{R} \pi = \pi \]
and is the stationary distribution of the random walk.

The existence and uniqueness of $\pi$ guaranteed by the Perron-Frobenius Theorem for all $0 \le \alpha < 1$ (see, for example, \cite{hornandjohnson}). For $\alpha = 1$,  $\pi_\alpha$ is well-defined provided $G$ 

\begin{itemize}
\item is aperiodic (i.e., the greatest common divisor of the length of all cycles is 1),
\item is weakly connected and 
\item there is a unique maximal induced subgraph that is strongly connected (perhaps with just a vertex with a single loop).
\end{itemize}

For simplicity, we will typically drop one of the subscripts of the Pagerank vector. We note that in this paper we will use subscripts to both the value of $\alpha$ and which vertex is being spoken of. 
For instance, we write $\pi_{\alpha = 1}$ to refer to the entire Pagerank vector when $alpha=1$ but also we write $\pi_A$ to denote to entry of the Pagerank vector corresponding to the vertex $A$; in the later case, the context of $\alpha$ will be clear.

%For $\alpha = 0$, the walker always jumps, so not only does $\pi_\alpha$ not depend $G$ but $\pi_\alpha$ is the uniform vector: %does not say anything about the underlying graphs since each vertex 
%$(1/n, 1/n, \ldots, 1/n)$ where $n$ is the number of vertices in $G$.

%That vector is used to assign each vector an importance score. The Pagerank algorithm is the process that generates that importance score. It can also generate those importance score by other methods such as finding an eigenvector or a recursive process.

\subsection{Problem Statement}

Even on the same graph, choosing different values for $\alpha$ can result in substantially different Pagerank vectors. For example, when $\alpha = 0$, the walker always jumps, in which case   %$\pi_\alpha$ only depends on the number of vertices in $G$ where 
$\pi_\alpha$ is the uniform vector: %does not say anything about the underlying graphs since each vertex 
$(1/n, 1/n, \ldots, 1/n)$ where $n$ is the number of vertices in $G$. On the other extreme, when $\alpha=1$, the walker always follows the arcs (unless they are stuck), so naturally, vertices with more in-arcs will be visited more often. In this situation, it is possible that $\pi_\alpha$ is concentrated at one vertex. This leads to the question: {\bf For the same graph, how different can these rankings be?}  Our main problem is to determine the maximum difference between Pagerank vector with one $\alpha$ and a Pagerank with a different $\alpha$ but on the same graph. Formally we ask:
%Let $\tilde{d} = (\tilde{d}_1 \ldots \tilde{d}_n)$ is a vector such that $\tilde{d}_1 \le \tilde{d}_2 \le \tilde{d}_3 \le \cdots \le \tilde{d}_n$

\begin{problem}
How large can $$||\pi_{\alpha_{1}}-\pi_{\alpha_{2}}||_2$$
be over all directed graphs $G$ and $\alpha_1$, $\alpha_2$ with $0 \le \alpha_1, \alpha_2 \le 1$? Here, $\pi_{\alpha_{1}}$ is the Pagerank vector produced on the graph $G$ with jumping constant $\alpha_1$, and $\pi_{\alpha_{2}}$ is the ranking values returned by the Pagerank algorithm for the graph $G$ with jumping constant different $\alpha_2$.
\end{problem}

We focus on the 2-norm as it is the traditional norm, though we briefly discuss the problem for other norms in Section \ref{sec:open} which we leave as an open problem for the reader.

As we will discover, this problem is difficult because it requires carefully constructed examples with precisely chosen parameters $\alpha_1$ and $\alpha_2$. While we do not answer this problem explicitly, we provide a construction that provides an extremal value of $\sqrt\frac{67}{50}$.  Surprisingly, to achieve this, we use $\alpha_1=1$ and $\alpha_2$ very close to 1. We believe $\sqrt\frac{67}{50}$ to be best possible.

\subsection{Previous Similar Work}
Based on the linear-algebraic interpretation of Pagerank mentioned previously, this problem is fundamentally one about the principal eigenvector of graph-theoretic matrices. Indeed,  problems of a similar nature have been considered before. For an irreducible nonnegative matrix, the {\it principal ratio} is the ratio between the maximum and minimum entries of the principal eigenvector. For Markov transition matrices of simple directed graphs,  Askoy, Chung, and Peng show that the principal ratio of a random walk can be superexponentially small \cite{aksoy2016extreme}. For the adjacency matrix of undirected graphs, Tait proved that the ``kite graph'' (a complete graph with a long path) acheives the maximum principal ratio \cite{tait2017three}.

\subsection{Important facts and properties of PageRank}

To get our feet wet, here are some important properties of Pagerank vectors we use.

\begin{fact}
For $\alpha = 0$, $\pi_\alpha = (1/n, 1/n, \ldots, 1/n)'$. 
\end{fact}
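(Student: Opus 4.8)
The plan is to observe that at $\alpha = 0$ the transition matrix $\mathbf{R}$ degenerates to a rank-one matrix, which reduces the claim to a one-line computation. First I would substitute $\alpha = 0$ into the defining formula for $\mathbf{R}_{ij}$: when ${deg}^{out}_j \neq 0$ the entry is $\frac{0}{{deg}^{out}_j} + \frac{1-0}{n} = \frac{1}{n}$, and when ${deg}^{out}_j = 0$ the entry is $\frac{1}{n}$ by definition. So both branches of the piecewise definition collapse, and every entry of $\mathbf{R}$ equals $\frac{1}{n}$; that is, $\mathbf{R} = \frac{1}{n}\mathbf{J}$, where $\mathbf{J}$ is the $n \times n$ all-ones matrix.

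Next I would check directly that $u := (1/n, \ldots, 1/n)'$ is fixed by $\mathbf{R}$: the $i$-th coordinate of $\mathbf{R}u$ equals $\sum_j \frac{1}{n} \cdot \frac{1}{n} = \frac{1}{n}$, so $\mathbf{R} u = u$. Since $u$ is a stochastic vector, it is a Pagerank vector with parameter $\alpha = 0$, and by the uniqueness of the stochastic eigenvector of eigenvalue $1$ (Perron--Frobenius, invoked above) it must be \emph{the} Pagerank vector $\pi_0$. For intuition one can instead argue probabilistically: at $\alpha = 0$ the walker, at each step, disregards its current position and jumps to a uniformly random vertex, so $P(\text{at vertex } i \text{ at step } t) = 1/n$ for every $t \ge 1$ and every $i$; letting $t \to \infty$ gives $(\pi_0)_i = 1/n$.

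There is essentially no obstacle here --- this is a warm-up fact meant to get our feet wet. The only point requiring (minimal) care is confirming that both cases in the piecewise definition of $\mathbf{R}_{ij}$ reduce to $1/n$ when $\alpha = 0$, so that the conclusion holds regardless of whether $G$ has sink vertices.
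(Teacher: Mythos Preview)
Your argument is correct and complete: substituting $\alpha=0$ collapses both branches of the definition of $\mathbf{R}_{ij}$ to $1/n$, so $\mathbf{R}=\frac{1}{n}\mathbf{J}$, and then the uniform vector is the unique stochastic fixed point by Perron--Frobenius. The paper itself offers no proof of this fact---it is stated as a warm-up observation and left to the reader---so your write-up is strictly more than what the paper provides, and there is nothing to compare against.
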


Where $n$ is the number of vertices in the graph. What this says is that, in practice, one should choose an $\alpha$ away from 0 so that the resulting values vertices are more differentiated. Indeed, the original $\alpha$ for the Pagerank algorithm was $\alpha=0.85$ \cite{pagerank}.

In the context of maximizing $\| \pi_{\alpha=1} - \pi_{\alpha=x}\|_2$, this suggests that we would want to choose $\alpha$'s away from 0. Our the extremal choices for $\alpha$ are closer to 1. 

Also, in practice, people usually avoid choosing an $\alpha$ close to 1 because accurately computing $\pi_{\alpha \approx 1}$ numerically is challenging using methods such as the power method \cite{langville2011google}. Our graphical plots fail to show the true behavior near $\alpha=1$ because when $k$ is large, these numerical methods are not accurate enough! See for example figure \ref{fig:alphaplot}

%\begin{fact} 
%For any non-negative vector $\mathbf{v}$ such that $||\mathbf{v}||_1 = 1$, $G^k \mathbf{v}$ converges to $\pi$ to as $k \to \infty$. 
%\end{fact}
%\begin{proof}
%$G = S \Lambda S^{-1}$ for some $S$ which is an eigenbasis for $G$ and $\Lambda$ is the diagonal eigenvalue matrix for $G$. Therefore, $G^k = S \Lambda S^{-1} S \Lambda S^{-1} \ldots S \Lambda S^{-1}=S \Lambda^k S^{-1}$. Since by the Perron Froebenius Theorem, $\Lambda_1 =1$, and all the other eigenvalues are less than one, $S \Lambda^k S^{-1} r = \pi 1 R^{T} r + \sum_{i=2} ^ n u_i \lambda_{i}^k v_i r=\pi$.
%\end{proof}
%\begin{fact} 
%$||\pi||_1 = 1$
%\end{fact}
%This comes from how $\pi$ is defined, because $\pi$ is a probability distribution of all the possibilities, which sum to $1$.
\begin{fact} 
$||\pi||_2 \leq 1$.
\end{fact}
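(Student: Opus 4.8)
The plan is to exploit the single structural fact we need about $\pi$: it is a \emph{stochastic} vector, i.e., all of its entries are nonnegative and they sum to $1$. This was established in the introduction (the Pagerank vector is the limiting distribution of a random walk, hence a probability vector), and it holds for every admissible $\alpha$, including the boundary case $\alpha = 1$ under the stated connectivity/aperiodicity hypotheses. Once we grant this, the statement is purely a comparison between the $\ell_2$ and $\ell_1$ norms on the probability simplex.

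First I would write $\|\pi\|_2^2 = \sum_{i=1}^n \pi_i^2$. The key step is to observe that, because each $\pi_i \ge 0$, we have the term-by-term bound $\pi_i^2 \le \pi_i \cdot \big(\sum_{j} \pi_j\big) = \pi_i$, since $\sum_j \pi_j = 1$ and all summands are nonnegative. Summing over $i$ then gives $\|\pi\|_2^2 \le \sum_i \pi_i = 1$, and taking square roots yields $\|\pi\|_2 \le 1$. Equivalently and even more directly, one can expand $1 = \big(\sum_i \pi_i\big)^2 = \sum_i \pi_i^2 + \sum_{i \ne j} \pi_i \pi_j \ge \sum_i \pi_i^2 = \|\pi\|_2^2$, where the dropped cross terms are nonnegative; this is just the standard inequality $\|x\|_2 \le \|x\|_1$ specialized to a nonnegative vector of unit $\ell_1$-norm.

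There is essentially no obstacle here: the only thing to be careful about is making sure the nonnegativity and the unit-sum normalization of $\pi$ are legitimately invoked, which they are by the Perron--Frobenius setup for $0 \le \alpha < 1$ and by the listed hypotheses for $\alpha = 1$. It is worth remarking, for the paper's later purposes, that equality $\|\pi\|_2 = 1$ holds precisely when $\pi$ is concentrated on a single vertex (a standard basis vector), which can indeed occur at $\alpha = 1$; this is exactly the regime the extremal construction exploits, and it explains why the bound $\sqrt{2}$ on $\|\pi_{\alpha_1} - \pi_{\alpha_2}\|_2$ mentioned in the abstract cannot quite be reached.
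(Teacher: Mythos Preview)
Your proof is correct and takes essentially the same approach as the paper: both rely on the standard norm inequality $\|\pi\|_2 \le \|\pi\|_1 = 1$ for a stochastic vector. You have simply spelled out the justification for $\|\pi\|_2 \le \|\pi\|_1$ in detail (via $\pi_i^2 \le \pi_i$ or by expanding $(\sum_i \pi_i)^2$), whereas the paper invokes the inequality in one line.
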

\begin{proof}
$||\pi||_2 \leq ||\pi||_1 = 1$.
\end{proof}
%\begin{fact} 
%$||\pi_{\alpha_1} - \pi_{\alpha_2}||_2 \leq 2$ 
%\end{fact}
%\begin{proof} 
%$||\pi_{\alpha_1} - \pi_{\alpha_2}||_2 \leq 
%||\pi_{\alpha_1}||_2 + ||\pi_{\alpha_2}||_2 \leq ||\pi_{\alpha_1}||_1 + ||\pi_{\alpha_2}||_1 = 2$ 
%\end{proof}
\begin{fact}
$||\pi_{\alpha}||_\infty \leq 1$ or in other words, $max(\pi_i) \leq 1$.  \hfill $\qed$
\end{fact}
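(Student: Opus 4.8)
The plan is to observe that $\pi_\alpha$ is a stochastic vector and deduce the bound directly from that. First I would recall that $\pi_\alpha$ arises as a limiting probability distribution of the random walk (equivalently, as the Perron eigenvector of $\mathbf{R}$ normalized so its entries sum to $1$), so that $(\pi_\alpha)_i \ge 0$ for every vertex $i$ and $\sum_{i} (\pi_\alpha)_i = 1$. Then, since every entry is nonnegative, a single entry cannot exceed the total: $(\pi_\alpha)_i \le \sum_j (\pi_\alpha)_j = 1$. Taking the maximum over $i$ gives $\|\pi_\alpha\|_\infty = \max_i (\pi_\alpha)_i \le 1$, which is the claim.

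An even shorter route, which I would mention as an alternative, is to chain the elementary norm inequality $\|x\|_\infty \le \|x\|_2$ with the previously established Fact that $\|\pi\|_2 \le 1$ (itself immediate from $\|\pi\|_2 \le \|\pi\|_1 = 1$). Either argument is a single line.

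I do not anticipate any real obstacle here; the only point requiring a moment's care is that nonnegativity of the entries must be invoked so that "the largest entry" is genuinely controlled by the entry-sum, and this is immediate from the probabilistic interpretation (or from Perron--Frobenius). The statement is recorded mainly for completeness, alongside the companion bounds $\|\pi\|_1 = 1$ and $\|\pi\|_2 \le 1$, since all three will be used when estimating $\|\pi_{\alpha=1} - \pi_{\alpha=x}\|_2$ later.
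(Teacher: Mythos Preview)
Your proposal is correct; either of the one-line arguments you give (bounding a single nonnegative entry by the entry-sum, or chaining $\|\cdot\|_\infty \le \|\cdot\|_2 \le 1$) is exactly the kind of immediate observation the paper has in mind. The paper itself records this Fact with a bare $\qed$ and no written proof, so your treatment is fully in line with it.
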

\begin{fact} 
$||\pi_{\alpha_1} - \pi_{\alpha_2} ||_\infty \leq 1$.
\end{fact}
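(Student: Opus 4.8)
The plan is to reduce the statement to the elementary observation that every entry of a Pagerank vector lies in the interval $[0,1]$. Since $\pi_{\alpha_1}$ and $\pi_{\alpha_2}$ are both stochastic vectors on the same vertex set of size $n$, each of their coordinates is a probability, hence nonnegative and bounded above by $1$ (this last point is exactly the preceding Fact that $\|\pi_\alpha\|_\infty \le 1$). So for each vertex $i$ we have $(\pi_{\alpha_1})_i \in [0,1]$ and $(\pi_{\alpha_2})_i \in [0,1]$.

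From there the argument is a one-line coordinatewise estimate: for every $i$,
\[
-1 \;\le\; 0 - 1 \;\le\; (\pi_{\alpha_1})_i - (\pi_{\alpha_2})_i \;\le\; 1 - 0 \;=\; 1,
\]
so $\bigl|(\pi_{\alpha_1})_i - (\pi_{\alpha_2})_i\bigr| \le 1$. Taking the maximum over all vertices $i$ gives $\|\pi_{\alpha_1} - \pi_{\alpha_2}\|_\infty \le 1$, which is the claim.

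I expect there to be no real obstacle here; the only thing worth a remark is that the bound cannot be improved in general. One can see tightness by letting $\pi_{\alpha_1}$ concentrate almost all its mass on a single vertex (take $\alpha_1$ close to $1$ on a graph with a dominating sink-like structure) while $\pi_{\alpha_2}$ spreads its mass out (take $\alpha_2$ close to $0$, so $\pi_{\alpha_2}$ is near the uniform vector $(1/n,\dots,1/n)$); on the dominant coordinate the difference approaches $1$ as $n\to\infty$. So the content of the Fact is precisely that this trivial bound is also the sharp one for the $\infty$-norm, in contrast to the more delicate $2$-norm problem that is the focus of the paper.
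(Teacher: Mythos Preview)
Your proof is correct and is essentially the same coordinatewise argument the paper gives: both $\pi_{\alpha_1}$ and $\pi_{\alpha_2}$ have entries in $[0,1]$, so each entry of the difference lies in $[-1,1]$. Your added remark on tightness is extra but accurate and consistent with the paper's later discussion of the $\infty$-norm.
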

\begin{proof}
$||\pi_{\alpha_1} - \pi_{\alpha_2} ||_\infty \leq |max(|\pi_{\alpha_1} |, |\pi_{\alpha_2}|)|_\infty %= |\pi_1 , \pi_2 |_\infty 
\leq 1$.
\end{proof}
\begin{fact}
$||\pi_1 - \pi_2||_2 \leq \sqrt{2}$
\end{fact}
\begin{proof}
$||\pi_1 - \pi_2||_2 \leq \sqrt{ ||\pi_1||_1 + ||\pi_2||_1} \leq \sqrt{2}$. The first inequality holds because the $\pi$s are non-negative. %This uses the fact that each entry is less than one. Extend
\end{proof}

This last bound is important because it provides an upper bound for how high $\| \pi_{\alpha=1} - \pi_{\alpha=x}\|_2$ can be. While we are unable to achieve $\sqrt{2} \approx 1.4142$, we are not too far from it: $\sqrt{\frac{67}{50}} \approx 1.1576$.

\section{Main Result}

% \begin{theorem}
% For any unweighted directed graph on $n$ vertices and any $\alpha_1, \alpha_2$,  
% \[ \| \pi_{\alpha_1}-\pi_{\alpha_2} \| \le \sqrt{\frac{14}{9}} + \frac{1}{n}. \]
% \end{theorem}

% \begin{proof}

% \end{proof}

\begin{theorem} \label{mainresult}
For any $\delta >0$, there exists an unweighted directed graph and $\alpha_1 = 1$ and $\alpha_2$ with 
\[ \| \pi_{\alpha_1}-\pi_{\alpha_2} \| \ge \sqrt{\frac{67}{50}} - \delta. \]

In particular, for the Pagerank vectors on the directed graph on $\Gamma(k)$ (defined in the next subsection),

\[ \lim_{k \to \infty} \| \pi_{\alpha_1=1}-\pi_{\alpha_2=1-1/k} \| = \sqrt{\frac{67}{50}}. \]
\end{theorem}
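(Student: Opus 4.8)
The plan is to construct a graph family $\Gamma(k)$ whose Pagerank vector exhibits a genuine discontinuity at $\alpha = 1$: the vector $\pi_{\alpha = 1}$ concentrates on the unique maximal strongly connected ``sink'' component, while $\pi_{\alpha = 1-1/k}$ (for the particular scaling $1-1/k$) still places a constant fraction of mass elsewhere even in the limit $k \to \infty$. The natural building blocks are a small ``sink gadget'' (the strongly connected piece that absorbs all the walk at $\alpha = 1$) together with a large set of roughly $k$ ``source'' vertices that feed into it; the number of sources must scale with $k$ precisely so that the $1-\alpha = 1/k$ jumping probability, multiplied by the number of sources, contributes a nonvanishing amount. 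First I would write down $\Gamma(k)$ explicitly and identify its strongly connected structure, verifying the hypotheses from the introduction (aperiodic, weakly connected, unique maximal strongly connected induced subgraph) so that $\pi_{\alpha = 1}$ is well-defined.

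Next I would compute $\pi_{\alpha_1 = 1}$ directly. Since at $\alpha = 1$ the walk is eventually absorbed into the sink component, $\pi_{\alpha=1}$ is supported there and equals the stationary distribution of the random walk restricted to that component; because the sink gadget is small and explicit, this is a finite linear-algebra computation independent of $k$. Then I would compute $\pi_{\alpha_2}$ with $\alpha_2 = 1 - 1/k$. Here I would set up the eigenvector equation $\mathbf{R}\pi = \pi$ for $\Gamma(k)$, exploit the symmetry among the $k$ source vertices (they should all receive the same Pagerank value, call it $s_k$, by an automorphism argument), and reduce to a small system in the handful of distinct values: $s_k$, the mass on each sink-gadget vertex, and any intermediate vertices. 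Solving this system yields each entry as an explicit rational function of $k$ (with $\alpha_2 = 1-1/k$ substituted in), and I would then take $k \to \infty$ termwise. The key quantitative phenomenon is that $k \cdot s_k$ tends to a positive constant, so the total source mass survives the limit, while the per-source contribution $s_k \to 0$ makes the $\ell_2$ norm of the source block behave like $k \cdot s_k^2 \to 0$; meanwhile the sink-block entries converge to values differing from $\pi_{\alpha=1}$ by a fixed amount. Assembling $\lim_k \|\pi_{\alpha_1=1} - \pi_{\alpha_2 = 1-1/k}\|_2^2$ from these pieces should give exactly $\tfrac{67}{50}$; the constants $67$ and $50$ presumably pin down the specific sizes/shapes of the gadget that optimize this.

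The main obstacle I expect is twofold. First, choosing the gadget correctly: the $\tfrac{67}{50}$ is a near-optimal value, so the sink gadget and the attachment structure must be tuned so that $\pi_{\alpha=1}$ is as concentrated as possible (pushing toward $\ell_2$ norm near $1$) while $\pi_{\alpha_2}$ in the limit is as ``spread'' as possible in the directions orthogonal to $\pi_{\alpha=1}$ — and one must check that no competing structure does better, which is really the content of the believed-optimality remark rather than of this theorem, but it still drives the construction. Second, the limit interchange and the asymptotics: one must be careful that taking $k \to \infty$ inside the eigenvector solve is legitimate (it is, since each entry is an explicit rational function of $k$ with nonvanishing denominator), and that the scaling $\alpha_2 = 1 - 1/k$ coupled with $|V(\Gamma(k))| \sim k$ is exactly the right joint rate — too fast and all mass collapses to the sink (recovering $\sqrt{2}\cdot 0$... rather the difference shrinks), too slow and $\pi_{\alpha_2} \to \pi_{\alpha=1}$ and the difference vanishes. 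The delicate bookkeeping is in showing the source block contributes $0$ to the limiting squared norm while the intermediate/sink blocks contribute exactly $\tfrac{67}{50}$. Once $\Gamma(k)$ is fixed, everything else is a routine (if tedious) computation, and the first displayed statement of the theorem follows immediately by taking $k$ large enough that $\sqrt{67/50} - \|\pi_{\alpha_1} - \pi_{\alpha_2}\|_2 < \delta$.
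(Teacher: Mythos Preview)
Your high-level instinct---engineer a discontinuity of $\pi_\alpha$ at $\alpha=1$ by making the $\alpha=1$ sink hard to reach when $\alpha<1$---is right, but the mechanism you propose (a bank of $k$ \emph{symmetric} source vertices feeding a small sink gadget) does not produce the needed effect and is not how $\Gamma(k)$ is built. If $k$ interchangeable sources each point, directly or through a fixed-size intermediate block, into the sink, then at $\alpha_2=1-1/k$ a walker sitting on any source reaches the sink in $O(1)$ steps; a short computation shows the sink still captures mass $1-o(1)$, so $\|\pi_{\alpha_1}-\pi_{\alpha_2}\|_2\to 0$, not $\sqrt{67/50}$. The paper's $\Gamma(k)$ instead uses a \emph{ladder}: the sink is a single looped vertex $A$, preceded by a chain $C_2\to B_1\to B_2\to\cdots\to B_k\to A$ in which every $B_i$ has out-degree $3$ (one forward arc to $B_{i+1}$ and two back-arcs to fixed base vertices $C_1,C_2$). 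The walker advances one rung with probability $\alpha/3$, so reaching $A$ from $C_2$ without a jump costs $(\alpha/3)^k$; this exponential barrier, not any source-count balancing, is what forces $\pi_A\to 0$ at $\alpha_2=1-1/k$.

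Consequently there is no automorphism among the $k$ bulk vertices, so your reduction to a single value $s_k$ is unavailable. One instead gets the recurrence $\pi_{B_i}=\tfrac{\alpha}{3}\,\pi_{B_{i-1}}+\tfrac{1-\alpha}{k+3}$, giving $\pi_{B_i}\sim 3^{-i}\pi_{C_2}$. The $\alpha_2$-mass does not dilute over $k$ vertices (which, as you correctly note, would kill its $\ell_2$ contribution); it \emph{concentrates} on the two fixed base vertices $C_1,C_2$, each tending to $2/5$, with a geometric tail along the $B_i$. The constant then decomposes as $67/50 = 1 + 17/50$: the $1$ is $(\pi_{\alpha=1}(A)-\pi_{\alpha_2}(A))^2\to 1$, and
\[
\tfrac{17}{50}=\lim_{k\to\infty}\|\pi_{\alpha_2}\|_2^2
=\left(\tfrac{2}{5}\right)^{2}+\left(\tfrac{2}{5}\right)^{2}\sum_{i\ge 0}3^{-2i}
=\tfrac{4}{25}+\tfrac{4}{25}\cdot\tfrac{9}{8},
\]
the geometric series coming from $C_2$ together with the $B_i$. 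So the missing idea in your plan is the length-$k$ ladder barrier and the resulting concentration on a \emph{fixed} pair of base vertices; the $B_i$ are neither symmetric nor $\ell_2$-negligible, and the sink is a single vertex rather than a multi-vertex gadget whose internal distribution shifts.
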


This answer is very interesting in several ways. First, the answer depends on the number of vertices in the graph in the sense that increasing the number of vertices in the graph enabled an slightly increased value of $||\pi_{\alpha_{1}}-\pi_{\alpha_{2}}||_2$. In this paper, we will show that $\displaystyle \lim_{k \to \infty}||\pi_{\alpha_{1}}-\pi_{\alpha_{2}}||_2$ (where $k + 3$ is number of vertices in the graph for very large $k$) goes to a certain size, but implicit in this is that increasing the number of vertices in the graph will allow a value closer to the limit. Also, the extremal difference occurs approximately when $\alpha = 1$ and $1 > \alpha \geq 1 - 1/k$. Figure \ref{fig:alphaplot} plots $\| \pi_{\alpha=1} - \pi_{\alpha=x}\|_2$ for a particular construction we give in the next section. The graph is beyond deceiving! When $x=1$, the value for $\| \pi_{\alpha=1} - \pi_{\alpha=x}\|_2 = 0$, but a large enough graph, the maximum value is achieved very very close to $x=1$!

We believe the value $\sqrt{\frac{67}{50}}$ is the best possible:

\begin{conjecture}

For \emph{any} unweighted directed graph (perhaps with loops and perhaps bidirected arcs) and two jumping constants $\alpha_1$ and $\alpha_2$, we have, 
\[ \| \pi_{\alpha_1}-\pi_{\alpha_2} \| < \sqrt{\frac{67}{50}}. \]
\end{conjecture}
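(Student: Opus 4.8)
The plan is to prove the conjecture as a true universal upper bound, so the object to bound is
\[
\sup_{G,\,\alpha_1,\alpha_2} \| \pi_{\alpha_1}-\pi_{\alpha_2} \|_2,
\]
over all finite unweighted directed graphs and all admissible jumping parameters. I would begin by reducing the problem to a more tractable optimization. Since both $\pi_{\alpha_1}$ and $\pi_{\alpha_2}$ are stochastic, write $d = \pi_{\alpha_1}-\pi_{\alpha_2}$; then $\mathbf{1}'d = 0$ and $\|d\|_2^2 = \sum_i d_i^2$. The naive bound $\sqrt2$ comes from ignoring all structure; the whole point is that the two vectors are \emph{not independent}, because they are eigenvectors of the \emph{same} graph under two different transition matrices $\mathbf{R}(\alpha_1)$ and $\mathbf{R}(\alpha_2)$. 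The first real step is therefore to make the coupling between the two vectors explicit. I would use the resolvent/Neumann-series representation of the Pagerank vector,
\[
\pi_\alpha = (1-\alpha)\,\bigl(I - \alpha \mathbf{A}\bigr)^{-1}\tfrac{1}{n}\mathbf{1},
\]
where $\mathbf{A}$ is the column-stochastic arc-following matrix (with the dangling-node correction folded in), valid for $\alpha<1$, and its limiting form as $\alpha\to1$. Subtracting the two representations expresses $d$ in terms of a single matrix $\mathbf A$ and the two parameters, which is the key to replacing a two-vector problem by a one-matrix problem.

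Next I would set up the extremal problem as a finite-dimensional (but variable-dimension) optimization and argue that an extremizer can be taken in a normal form. Concretely, I would show that at an optimum the graph decomposes into a small number of "roles": sink-like strongly connected pieces that absorb mass as $\alpha\to1$, and the remaining transient vertices, and that mass within each role can be lumped. The target number $67/50$ strongly suggests that the worst case is governed by a constant-size "reduced graph" (just a handful of aggregated states), with the parameter $k$ in the family $\Gamma(k)$ serving only to drive the reduced transition probabilities to their limiting values. So the plan is: (i) prove a lumping/aggregation lemma that collapses any graph to an equivalent weighted Markov chain on $O(1)$ states without decreasing $\|d\|_2$; (ii) on that bounded-size reduced chain, treat the surviving transition probabilities and the two $\alpha$ values as free real variables in a compact region; and (iii) solve the resulting finite calculus problem, showing the supremum of $\|d\|_2^2$ equals $67/50$ and, crucially, is \emph{not attained}, which yields the strict inequality.

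The strictness and the non-attainment are where I would be most careful. The theorem already establishes $67/50$ as a limit along $\Gamma(k)$ with $\alpha_2 = 1-1/k \to 1$; since the value is approached but the construction never reaches it for finite $k$, the natural statement is that every actual graph gives something strictly less, with $67/50$ as the unattained sup. I would prove this by showing that on the compact reduced parameter region the objective extends continuously, that its maximum over the \emph{closure} is exactly $67/50$, and that this maximum is achieved only at a boundary point corresponding to $\alpha_2=1$ with degenerate transition probabilities (a configuration that does not arise from any genuine finite aperiodic strongly-connected-core graph with $\alpha_2<1$). Thus every legitimate $(G,\alpha_1,\alpha_2)$ maps to an interior or non-degenerate point, where the strict inequality holds.

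The main obstacle I expect is step (i), the aggregation lemma: proving that \emph{no} graph, however cleverly built, can beat the reduced-model bound. The difficulty is that $\|d\|_2^2$ is a sum of squares, so lumping vertices (which pools their probability mass into one coordinate) generally \emph{changes} the $2$-norm in a direction that is not obviously favorable — pooling $p,q$ into $p+q$ replaces $p^2+q^2$ by $(p+q)^2$, which is larger, so one must show the pooled configuration is still feasible and still a valid Pagerank difference, and control the sign of the change carefully. I would handle this by a majorization/convexity argument: among all mass splittings consistent with the eigenvector equations, the extremal $\|d\|_2$ is attained at the most concentrated (fewest-state) configuration, so aggregation can only help the adversary and hence bounding the aggregated model suffices. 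Making this rigorous — ruling out exotic many-state graphs that might exploit cancellation in $d$ to exceed $67/50$ — is the crux, and I suspect it will require the explicit resolvent identity from the first paragraph to pin down exactly which state geometries maximize $\sum_i d_i^2$ subject to $\mathbf 1'd=0$.
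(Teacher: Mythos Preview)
First, note that this statement is a \emph{conjecture} in the paper: the authors do not prove it, and in Section~\ref{sec:open} they explicitly call their supporting reasoning ``far from a proof.'' So there is no paper proof to compare against; the question is whether your plan could stand on its own.

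It cannot, and the failure point is exactly the step you flagged as the obstacle, but for a sharper reason than you identified. Your aggregation step (i) passes from an unweighted directed graph to a weighted reduced chain with $O(1)$ states and \emph{free real transition probabilities}, and step (iii) then maximizes over that continuous region. But the value $\sqrt{67/50}$ is a consequence of the \emph{discreteness} of unweighted graphs, not of any column-stochastic constraint. The paper's own heuristic in Section~\ref{sec:open} makes this explicit: optimizing over the ladder family with a continuous parameter $m$ (the number of $C$-vertices) gives
\[
\|\pi_{\alpha_1}-\pi_{\alpha_2}\|_2^2 \;\approx\; 1+\frac{m^4+2m^3+m}{(m+2)(m^2+1)^2},
\]
with a maximum of roughly $1.3604$ at $m\approx 1.445$, strictly larger than $67/50=1.34$. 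A weighted reduced chain can realize this non-integer optimum (simply tune the self-return versus forward transition probabilities at a single $C$-vertex to mimic fractional $m$), so your relaxed optimization in step~(iii) will produce an upper bound of at least $\sqrt{1.3604}$, not $\sqrt{67/50}$. In other words, your scheme bounds a strictly larger class than the conjecture concerns, and that larger class genuinely exceeds the conjectured constant.

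Separately, your majorization heuristic (``aggregation can only help the adversary'') is not correct as stated: pooling coordinates $d_i,d_j$ of the difference vector replaces $d_i^2+d_j^2$ by $(d_i+d_j)^2$, which is \emph{smaller} when $d_i$ and $d_j$ have opposite signs, so lumping can decrease $\|d\|_2$ and need not give an upper bound at all. Any viable attack on the conjecture must retain the integrality/unweighted constraint through to the final optimization, which your outline does not do.
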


We will give our reasoning and intuition for this conjecture in Section \ref{sec:open}

\subsection{Our Construction\texorpdfstring{, $\Gamma(k)$}{}}

Our construction is a long ``ladder'' with $k+3$ vertices: $C_1, C_2, B_1, \ldots, B_k,$ and $A$. It is illustrated in graph Figure $\ref{fig:6750}$ and is constructed as follows. Vertex $C_1$ has an arc to itself and to $C_2$. $C_2$ has an arc back to $C_1$, a loop to itself and a path to $B_1$. 
%Each of the $k$ $B_i$ vertices has an arc to both vertices $C_1$ and $C_2$.
$B_1$ has an arc back to $C_1$ and $C_2$, and an arc to $B_2$. Likewise, for all $i < k$, $B_i$, has arcs back to both $C_1$ and $C_2$ and an arc to $B_{i+1}$. Then, $B_{k}$ which has an arc back to $C_1$ and $C_2$ and to a vertex $A$. However, vertex $A$ only has a loop to itself. %There are $n$ vertices at the bottom.
%TO DO: What does "$n$ vertices at the bottom." mean, be more specific.

This construction will end up concentrating the random walkers either in $A$ or in $C_1$ and $C_2$, depending on the value of the jumping constant $\alpha$. 

When $\alpha=1$, the random walker will always follow the arcs. The walker will climb up and fall down ``the ladder'', but inevitably, they will eventually make it to $A$ and become stuck. Therefore, when $\alpha =1$ we have $\pi_A = 1$. 

On the other hand, for other, carefully chosen, values of $\alpha$ close to 1, the random walker will frequently ``fall down'' to $C_1$ and $C_2$. In which case, it is next to impossible for the walker to get to $A$. And even if the walker gets to $A$, since $\alpha < 1$, the walker is able to avoid becoming stuck. Within the proof of Theorem \ref{mainresult}, we show that $\pi_A \to 0$ as $k \to \infty$ for $\alpha = 1 - 1/k$.

%Note that in this proof we will use $\pi_t (P_A)$ to denote the probability of being at vertex $A$ at time step $t$, while I will use $\pi_A$ to denote the Pagerank value assigned to $A$ for a given $k$.

%Copy and paste the construction

%Think about this
%Based sole on the number in in-edges, either vertex $C_1$ or $C_2$ would have the highest ranking, as every vertex except for $A$ has an arc pointing to it. On the other hand, if a random walker were to always follow an edge at each step, it would assuredly be stuck at $A$ in finite time. As a result, when $\alpha = 1$, $A$ would have the highest ranking with $\pi_A = 1$

\begin{figure}
  \centering
\begin{tikzpicture}
    % a node set with absolute coordinates
    \node[state] (a) at (10,4) {$A$};
    \node[state] (b) at (8.5,4) {$B_k$};
    \node[state] (c) at (7,4) {$\cdots$};
    \node[state] (d) at (5.5,4) {$B_1$};
    \node[state] (e) at (4,4) {$C_2$};
    \node[state] (f) at (2.5,4) {$C_1$};
   % \node[state] (h) at (1,0) {$H$};
    %\node[state] (i) at (1,1.5) {$G$};
 %   \node[state] (j) at (1,3) {$F$};
  %  \node[state] (k) at (1,4.5) {$E$};
   % \node[state] (l) at (1,6) {$\vdots$};

    % Directed edge
    \path[<-] (a) edge (a);
    \path[->] (b) edge (a);
    \draw[<-] (b) edge (c);
    \draw[<-] (c) edge (d);
    \draw  (e) edge [<-, out=30, in=30] (b);
    \draw  (f) edge [<-, out=30, in=30] (c);
    \draw  (e) edge [<-, out=30, in=30] (c);
    \draw[->] (e) edge (f);
    \draw (f) edge [<-, out=30, in=30]  (b);
    \draw (f) edge [<-, in=30, out=30 ]  (d);
    \draw[<-] (d) edge (e);
    
    \draw[<-] (e) edge (f);
%    \draw[<-] (f) edge (h);
 %   \draw[<-] (f) edge (i);
    %\draw[<-] (f) edge (j);
%    \draw[<-] (f) edge (k); 
 %   \draw[<-] (f) edge (l); 
    \path (a) edge [loop below] (a);
    \path (e) edge [loop below] (e);
    \path (f) edge [loop left] (f);
    \draw[->] (d) edge  (e);
\end{tikzpicture}
\newline
  
    \caption{Construction of the family of graphs used in the proof of Theorem \ref{mainresult}}
    \label{fig:6750}
\end{figure}
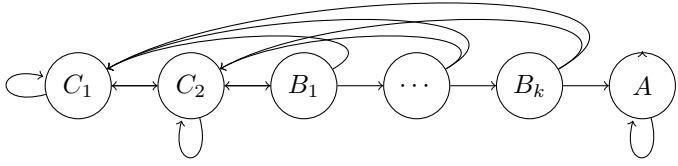

\subsection{Proof of Theorem \ref{mainresult}}

%\begin{proof}

%Consider the directed graph, $\Gamma_{k} pictured in Figure \ref{fig:6750}, it is constructed as follows. Let $k$ vertices have an arc to vertices $C_1$, and $C_2$. Vertex $C_1$ has an arc to itself and to $C_2$. $C_2$ has an arc back to $C_1$, a loop to itself and a path to $B_1$. $B_1$ also has an arc back to $C_1$ and $C_2$, and an arc to $B_2$. Likewise, for $i \le k-2$, $B_i$, has an arc back to $C_1$ and $C_2$, and an arc to $B_{i-1}$. Then, $B_{k-1}$ which has an arc back to $C_1$ and $C_2$ and to a vertex $A$. However, vertex $A$ only has a loop to itself. %The are $n$ vertices at the bottom.
%TO DO: What does "$n$ vertices at the bottom." mean, be more specific.
%Note that in this proof we will use $\pi_t (P_A)$ to denote the probability of being at vertex $A$ at time step $t$, while I will use $\pi_A$ to denote the Pagerank value assigned to $A$ for a given $k$.
Our proof is outlined as follows. We will start by explicitly computing $\pi_{\alpha_1=1}$ for $\Gamma(k)$ for any $k > 0$. We then compute  $\pi_{\alpha_2=1-1/k}$ and show that $\pi_{C_1}$ and $\pi_{C_2} \to \frac{2}{5}$ as $k \to \infty$. We then analyze $\pi_{B_i}$ and show that $\sum_i \pi_{B_i}^2$ can be approximated by a geometric series in terms of $\pi_{C_2}$. As a result, we will have that as $k\to \infty$, $\|\pi_{\alpha_1=1}-\pi_{\alpha_2=1-1/k}\|_2 ^2 \to 67/50$.
~\\

%Then, we will prove that for $k$ sufficiently large we can get $\|\pi_{\alpha_1=1}-\pi_{\alpha_2=1-1/k}\|_2$ to be sufficiently close to $\sqrt{67/50}$.

%First, let us compute $\pi_{\alpha_1=1}$.

\subsubsection{Computing \texorpdfstring{$\pi_{\alpha=1}$}{when alpha=1}} 

\begin{lemma} 
When $\alpha=1$, $\pi_A = 1$, and $\pi_v = 0$ for any vertex $v \ne A$.
\end{lemma}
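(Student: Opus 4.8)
The plan is to analyze the Markov chain directly when $\alpha = 1$. When $\alpha = 1$, the transition matrix $\mathbf{R}$ has entries $\mathbf{R}_{ij} = 1/\mathrm{deg}^{out}_j$ whenever $j$ has an out-neighbor, and since every vertex of $\Gamma(k)$ has at least one out-neighbor (indeed $A$ has a loop, and every other vertex has an arc forward and/or back), the transition probabilities are entirely determined by the arc structure with no ``jumping'' term. First I would observe that the vertex $A$ is \emph{absorbing}: its only out-arc is the loop to itself, so once the walker reaches $A$ it stays at $A$ forever, i.e.\ $\mathbf{R}_{AA} = 1$ and $\mathbf{R}_{Aj} = 0$ for $j \neq A$ (reading $\mathbf{R}$ as acting on the right on column vectors, $\mathbf{R}\pi = \pi$). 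Consequently the standard basis vector $e_A$ (all mass on $A$) is a stationary distribution: $\mathbf{R}e_A = e_A$.

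Next I would argue uniqueness, which is what the three bullet conditions in the introduction guarantee. I would verify that $\Gamma(k)$ is weakly connected (the underlying undirected graph is a path $C_1 - C_2 - B_1 - \cdots - B_k - A$ plus extra back-arcs, hence connected), that $\{A\}$ with its loop is the unique maximal strongly connected induced subgraph that is ``terminal'' (every other vertex can reach $A$, since from any $B_i$ one can walk forward $B_i \to B_{i+1} \to \cdots \to B_k \to A$, and from $C_1, C_2$ one can reach $B_1$; but $A$ can reach no other vertex), and that the chain restricted to the recurrent class $\{A\}$ is aperiodic because of the loop. Under these hypotheses the Perron--Frobenius / ergodic theory for the PageRank limit (cited in the excerpt) gives that $\pi_{\alpha=1}$ is well-defined and unique, so it must equal the stationary vector we already exhibited, namely $e_A$.

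Alternatively — and this is the cleaner route to write up — I would give the elementary probabilistic argument directly: for $\alpha = 1$ the walk is an ordinary random walk on $\Gamma(k)$, $A$ is the only sink, and from every vertex there is a positive-probability directed path to $A$ of length at most $k+2$. Hence the probability of \emph{not} having reached $A$ by step $t$ decays to $0$ as $t \to \infty$ (it is bounded above by $(1-p)^{\lfloor t/(k+2)\rfloor}$ for a suitable fixed $p > 0$), so $(\pi_{\alpha=1})_A = \lim_{t\to\infty} P(\text{at } A \text{ at step } t) = 1$ and $(\pi_{\alpha=1})_v = 0$ for all $v \neq A$.

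The only real subtlety — the ``main obstacle,'' though it is mild — is making sure the limiting probability is genuinely well-defined in the first place (the chain is not irreducible, since $A$ is absorbing), and that the walker cannot get trapped forever bouncing among $C_1, C_2, B_1, \ldots, B_k$ without ever escaping to $A$. This is handled by the escape-probability bound just described: every excursion has a uniformly positive chance of climbing the whole ladder to $A$, so escape happens almost surely and in fact the convergence is geometric. No delicate calculation is needed; the lemma is essentially a structural observation about the absorbing state.
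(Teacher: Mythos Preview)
Your proposal is correct and follows essentially the same idea as the paper: $A$ is an absorbing state, the walker almost surely reaches it, hence $\pi_A=1$. The paper's proof is a one-sentence version of your second (probabilistic) route, without the explicit escape-probability bound or the verification of the three well-definedness conditions; your write-up is more rigorous but not conceptually different.
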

%This is important because it is one side of the story
%Drop general note that strongly connected graph with one vertex with no outs that vertex will go to one
\begin{proof}
The loop at $A$ ensures that when $\alpha = 1$, a ``random-walker'' must follow their way to $A$ and remain there. Therefore, $\pi_{A}$ has a value of 1 at vertex $A$ and 0 at all other vertices. 
\end{proof}

Now, the more difficult task, of carefully computing $\pi_{\alpha_2 = 1-1/k}$ remains.

\subsubsection{Computing \texorpdfstring{$\pi_A$ when $\alpha=1-1/k$}{the stationary distribution of A}}

%Given $\delta$, we will choose, $k=1/ 1/k$.
%We will show that by choosing $\alpha_2 = 1- 1/k$ to be sufficiently close to 1, we can obtain a graph with $\| \pi_{\alpha_1}-\pi_{\alpha_2} \| \ge \sqrt{\frac{67}{50}} - 1/k$. To do so, we use $
%We now consider what happens when $\alpha < 1$. \newline
%{\textbf{Present and justify the equations for $\pi_A$, $\pi_{C_1}$, $\pi_{C_2}$ $\pi_N$}} (you should also define $\pi_N$)
%For convenience, we will take $\alpha = 1- 1/k$.
%First, 
We will show that as $k\to\infty$, %as $\alpha \to 1$ with $(k, n, 1/k)  = (k, 2k, 1/k)$ as $k \to \infty$, 
%given these choices %$k=1/ 1/k$ where $1/k=1-\alpha$ and $n=2k$

\[ \pi_{C_1} \to \frac{2}{5}, \quad \pi_{C_2} \to \frac{2}{5}, \text{ and } \quad \pi_{A}  \to 0. %\pi_\alpha(B_i) \to \frac{1}{3^{i+1}}, \quad \pi_N \to 0 \]
\]
%UPDATED THIS PARAGRAPH to not be a lemma

%{\textbf{Solve the equations, in steps} for $\pi_A$, $\pi_C$, $\pi_N$}

%\begin{lemma}
%If at step $t$ defne $\Delta \pi_v$ to be the net
% Statemend defining $$\Delta$ and its use.
%\end{lemma}
%For a given directed graph $G$ and $\alpha$, we define the Pagerank vector with jumping parameter $\alpha$
%\[ (\pi_\alpha)_i = \lim_{t \to \infty} P(\text{being at vertex }i\text{ at step }t)\] for each vertex $i$.

% Let $\pi_{v, t}$ be \[P(\text{being at vertex }i\text{ at step }t). \] Let $\Delta \pi_v = \pi_{v, t} - \pi_{v, t-1}$ be the change of probabilities for vertex $v$ from any step $t-1$ to $t$. 
 
 Our main technique for computing $\pi_v$ is to consider change with each time step for sufficiently large $t$. We let $\Delta \pi_v = \sum_u \left( \mathbf{R}_{v,u} \pi_u \right) - \pi{v}$. Effectively, $\Delta \pi_v$ is ``rate in'' - ``rate out''.    % Upon taking the limit as $t \to \infty$, we have that $\pi_{v, t} - \pi_{v, t-1} \to \pi_v - \pi_v = 0$.
 
\begin{lemma}
For any vertex $v$, the stationary distribution obeys $\Delta \pi_v = 0$. 
\end{lemma}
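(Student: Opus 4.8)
The statement to prove is that at stationarity, $\Delta \pi_v = 0$ for every vertex $v$, where $\Delta \pi_v = \sum_u \mathbf{R}_{v,u}\pi_u - \pi_v$.

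The plan is to simply unwind the definitions. Since $\pi = \pi_\alpha$ is defined to be the right eigenvector of the transition matrix $\mathbf{R}$ with eigenvalue $1$ (equivalently, the stationary distribution of the random walk), we have $\mathbf{R}\pi = \pi$. Reading this off coordinate-wise gives, for each vertex $v$,
\[
(\mathbf{R}\pi)_v = \sum_u \mathbf{R}_{v,u}\pi_u = \pi_v.
\]
Subtracting $\pi_v$ from both sides yields exactly $\Delta \pi_v = \sum_u \mathbf{R}_{v,u}\pi_u - \pi_v = 0$, as claimed.

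This is essentially a restatement of the eigenvector equation in the ``rate in minus rate out'' language that will be convenient for the subsequent balance-equation computations on $\Gamma(k)$; there is no real obstacle. The only thing worth a sentence is the justification that such a $\pi$ exists and the equation $\mathbf{R}\pi = \pi$ is legitimate: for $0 \le \alpha < 1$ this is guaranteed by the Perron--Frobenius theorem as noted earlier in the excerpt, and for $\alpha = 1$ it holds under the aperiodicity/connectivity hypotheses already stated (which $\Gamma(k)$ satisfies). So the ``hard part'' is merely bookkeeping: one must be careful that the sum $\sum_u \mathbf{R}_{v,u}\pi_u$ ranges over exactly the in-neighbours of $v$ (together with the uniform jump contribution), matching the flow interpretation, so that when we later write down $\Delta \pi_v = 0$ for the specific vertices $A$, $B_i$, $C_1$, $C_2$ of $\Gamma(k)$ the incoming terms are accounted for correctly.
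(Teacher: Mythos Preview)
Your proof is correct and matches the paper's approach exactly: the paper treats this lemma as immediate from the eigenvector equation $\mathbf{R}\pi=\pi$ and gives no further argument, which is precisely what you have written out in detail.
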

\hfill $\square$ %this just means we aren't providing a proof
%Since we are concerned with the stationary distribution we will omit the time from our notation.
~\\
We can apply the previous lemma to compute $\pi_A$.

For a random walker to reach $A$ there are 4 disjoint possibilities:

\begin{itemize}
    \item $k$ steps ago, the random walker was at $C_2$ and did not jump within the intervening $k$ steps from $C_2$ to A. 
    \item 1 step ago, the walker was at $A$ and then they took the loop back to $A$.
    \item $k-i$ steps ago, the walker jumped to $B_i$ and the made it up the ``ladder'' to $A$ without jumping again, or
    \item the walker could have directly jumped to $A$.
\end{itemize}

Therefore, we have

\[ \Delta \pi_A = \left(\left(1-\frac1k\right)^k  \pi_{C_{2}} (1/3)^k + \left(1-\frac1k\right) \pi_A  + \sum_{i=1}^{k} \frac{\frac1k (1/3)^{k-i} \left(1-\frac1k \right)^{k-i}}{3+k} \right) - \pi_A.  \]

The first term accounts for the first possibility. The second term accounts for the second possibility. The sum accounts for the last two possibilities. Finally, $-\pi_A$ falls from the definition of $\Delta \pi_A$ and contains all of the walkers leaving $A$ even if they come back.

This simplifies to 

\[ \Delta \pi_A = \left(\left(1-\frac1k\right)^k  \pi_{C_{2}} (1/3)^k + -\frac1k \pi_A  + \sum_{i=1}^{k} \frac{\frac1k (1/3)^{k-i} (1-\frac1k )^{k-i}}{3+k} \right).  \]

~\\

%\begin{lemma}
{\noindent \it Claim:} \[ \pi_A \to 0 \text{ as } k \to \infty \]
%\end{lemma}

\begin{proof}
For the stationary distribution,
\[ \Delta \pi_A = (1-\frac1k)^k  \pi_{C_2} (1/3)^k - \pi_A /k  + \sum_{i=1}^{k} \frac{\frac1k (1/3)^{k-i} (1-\frac1k)^{k}}{3+k}.  \]
Observe that the series
\[\sum_{i=1}^{k}\left (\frac{1/k~(1/3)^{k-i} (1-1/k)^k}{3+k}\right).\]
%This will change for the other proof
is a finite geometric series that's bounded above by its infinite sum which is \[ \frac{3/k}{2(3+k)}.\] 
 
 %\[(1-1/k)^k \cdot \pi_{C_2} \cdot (1/3)^k +\frac{3/k}{2(3+k)} - \pi_A \cdot 1/k > 0\]. %Note that if $\alpha = 0$ then $\pi_A$ will only stop increasing when $C =0$ which confirms Lemma 12. 
And solving for $\pi_A$ we have \[ \pi_A < \frac{(1-1/k)^k \pi_{C_2} (1/3)^k}{1/k} + \frac{3}{2(3+k)}. \]
 Since $0 \le \pi_{C_2} \le 1$ and $1/k > 0$, we can simplify the above equation
%\begin{equation} \label{pi_a}
 %    \pi_A < \frac{(1-1/k)^k \pi_{C_2} (1/3)^k}{1/k} + \frac{3 1/k}{2(3+k)}%< \frac{ (1/3)^k}{1/k} + \frac{3}{2(3+k)}
%\end{equation}

%Note that $\pi_{C_2} \le 1 $, so we have,

\begin{equation} \label{pi_a2}
     \pi_A < \frac{(1-1/k)^k (1/3)^k}{1/k} + \frac{3}{2(3+k)}< \frac{ (1/3)^k}{1/k} + \frac{3}{2(3+k)} = \frac{k}{3^k} + \frac{3}{2(k+1)}. 
\end{equation}

%Using Equation \ref{pi_a2}, we have

%\begin{eqnarray*}
%\pi_A 
%&<& \frac{(1-1/k)^k (1/3)^k}{1/k} + \frac{3}{2(3+k)} \\
%&<& \frac{ (1/3)^k}{1/k} + \frac{3}{2(3+k)} = \frac{k}{3^k} + \frac{3}{2(k+1)}  
%\end{eqnarray*}
 
Therefore, as $k \to \infty$, $\pi_A \to 0$

\end{proof}

%\begin{lemma}
%\[ \pi_N \to 0 \text{ as } k \to \infty \]
%\end{lemma}

%\begin{proof}
%Let $N$ the set of $n$ vertices which point into $C$ but which have no vertices pointing into them.
%\[ \Delta \pi_N = \frac{n 1/k}{3+k} - \pi_N\]

%The above equation holds because vertices in $N$ can only receive probability mass by jumping into them. Hence, the total mass entering $N$ is $\frac{n1/k}{3+k}$ and the total mass leaving $N$ is $\pi_N$. Here, jumping from a vertex in $N$ to another vertex in $N$ is accounted by counting it leaving and coming back again.

%Thus, for the stationary distribution
%\[  \pi_N = \frac{n~1/k}{2+k+n} \]
%Simplifying, 
%\[  \pi_N = \frac{n~1/k}{2+k+n}< \frac{n 1/k}{n}=1/k \]
%Thus $\pi_N$ is less than the total probability that is jumping (i.e. $1/k$). Hence, since $1/k = 1/k$, $\pi_N \to 0$ as $k \to \infty$.
%\end{proof}

\subsubsection{Relating \texorpdfstring{$\pi_{C_1}$, $\pi_{C_2}$ and $\sum_i \pi_{B_i}$ when $\alpha=1-1/k$}{the stationary distribution various vertices}} 

{\it Claim:}
\[ \pi_{C_1} = \pi_{C_2} \]
and
\[  \pi_{C_1} = \frac{1/k}{3+k} + (1-1/k) \cdot \frac{\sum_{i=1}^{k} \pi_{B_i}}{3}+ \pi_{C_1} /3 + (1-1/k) \cdot \pi_{C_1} /2 \]
and hence 
\[  \pi_{C_2} = \frac{1/k}{3+k} + (1-1/k) \cdot \frac{\sum_{i=1}^{k} \pi_{B_i}}{3}+ \pi_{C_2} /3 + (1-1/k)\cdot\pi_{C_2} /2 \]

\begin{proof}
Now, consider $\Delta \pi_{C_1}$ and $\Delta \pi_{C_2}.$
For $C_1$, the vertex at the bottom of the chain of vertices, we have

%\begin{lemma}
\[ \Delta \pi_{C_1} = \frac{1/k}{3+k} + (1-1/k)\cdot \frac{\sum_{i=1}^{k} \pi_{B_i}}{3}+ \pi_{C_2} /3 + (1-1/k)\cdot\pi_{C_1} /2 - \pi_{C_1}\]
%\end{lemma}

%This next part will change for the next section

%Note: 

%\begin{proof}
This follows similarly to the case for $A$ and $\pi_A$ previously. The first term is represents mass jumping from some vertex to $C_1$; the second term is the probability mass that is at $B_i$ and returns to $C_1$; the third term is the mass from $C_2$ that goes back to $C_1$; the fourth term is the probability that $C_1$ gets from its self loop; and the fifth term is everything leaving $C_1$. 
%\end{proof}

%Thus, for the stationary distribution
%\begin{lemma} \label{C1}

For $\Delta\pi_{C_1} = 0$, we have 

\[  \pi_{C_1} = \frac{1/k}{3+k} + (1-1/k) \cdot \frac{\sum_{i=1}^{k} \pi_{B_i}}{3}+ \pi_{C_2} /3 + (1-1/k)\cdot\pi_{C_1} /2 \]
%\end{lemma}
%This follows by using the last equation and setting $\Delta_{\pi_{C_1}} = 0$ and moving $\pi_{C_1}$ to the left side. \hfill \qedsymbol

Similarly for $\pi_{C_2}$, we have
%\begin{lemma} \label{C2}
\[ \Delta \pi_{C_2} = \frac{1/k}{3+k} + (1-1/k) \cdot \frac{\sum_{i=1}^{k} \pi_{B_i}}{3}+ (1-1/k)\cdot \pi_{C_2} /3  + (1-1/k)\cdot \pi_{C_1} /2 - \pi_{C_2} \]
\text{ and } 
\begin{equation}  \pi_{C_2} = \frac{1/k}{3+k} + (1-1/k) \cdot \frac{\sum_{i=1}^{k} \pi_{B_i}}{3}+ (1-1/k)\cdot \pi_{C_2} /3 + (1-1/k)\cdot\pi_{C_1} /2 \label{C2}
\end{equation}
%\end{lemma}

%\begin{lemma} \label{C1vsC2}
%\newline
The symmetry for the equations for $\pi_{C_1}$ and $\pi_{C_2}$ give that
\[ \pi_{C_1} = \pi_{C_2} \] 
\end{proof}

%\end{lemma}
%\begin{proof}
%By subtracting $\pi_{C_1} - \pi_{C_2}$ from the aforementioned equations we can establish that $\pi_{C_1} - \pi_{C_2} = 0$ 
%\end{proof}
%Thus we can say that $C_1 = \pi_{C_2} + N \cdot (1-1/k)$.

Now, let us relate $\pi_{C_2}$ to the $\pi_{B_i}$.

%\begin{lemma}\label{pi_c2}
We can substitute %Lemma \ref{C1vsC2}
$\pi_{C_2} $ in for $\pi_{C_1}$ in % Lemma \ref{C2v}:
the equation for $\pi_{C_2}$
\[  \pi_{C_2} = \frac{1/k}{3+k} + (1-1/k)\cdot \frac{\sum_{i=1}^{k} \pi_{B_i}}{3}+ (1-1/k)\cdot \pi_{C_2} /3 + (1-1/k)\cdot(\pi_{C_2}) /2 \]
\begin{equation}   \pi_{C_2} = \frac{1/k}{3+k} + (1-1/k)\cdot \frac{\sum_{i=1}^{k} \pi_{B_i}}{3}+ \frac{(1-1/k)\cdot \pi_{C_2} \cdot5}{6} \label{C2v} \end{equation}
%\end{lemma}

%\begin{proof}

%\end{proof}

%\begin{equation}  \label{pi_c2} \pi_{C_2} = \frac{1/k}{3+k} + (1-1/k)\cdot \frac{\sum_{i=1}^{k} \pi_{B_i}}{3}+ \frac{(1-1/k)\cdot \pi_{C_2} \cdot5}{6} + (1-1/k)\cdot(N \cdot (1-1/k)) /2 \end{equation}

%Goal: $C_1 \to \pi_{C_2}$
%$\pi_{C_1} \to 2/5$
%\sum_{i=1}^{k} \pi_{B_i} \to \pi_{C_2} /2
%A, N, \to 0
%$\pi_{C_1} + \pi_{C_2} + \sum_{i=1}^{k} \pi_{B_i} =1$

Finally, let us each consider the $\pi_{B_i}$.
%\begin{lemma}
\[ \Delta \pi_{B_i} = \frac{1/k}{3+k} + (1-1/k)\cdot \frac{\pi_{B_{i-1}}}{3} - \pi_{B_i}\]
%\hfill\qedsymbol
%\end{lemma}
And thus for the stationary distribution,

%\begin{lemma} \label{Bi}
\[  \pi_{B_i} = \frac{1/k}{3+k}+ (1-1/k)\cdot \frac{\pi_{B_{i-1}}}{3} \]
%\end{lemma}
%\hfill\qedsymbol

And also for $B_1$,
%\begin{lemma} \label{B1}
\[  \pi_{B_1} = \frac{1/k}{3+k}+ (1-1/k)\cdot \frac{\pi_{C_2}}{3} \]
%\end{lemma}
%\hfill\qedsymbol

\[  \pi_{B_i} > (1-1/k) \frac{\pi_{B_{i-1}}}{3} \]
and
\[  \pi_{B_1} > (1-1/k) \frac{\pi_{C_2}}{3}. \]
So by induction, 
\begin{equation}\label{Bi}
\pi_{B_i} > (1-1/k)^i \frac{\pi_{C_2}}{3^i} 
\end{equation}
and 
%\begin{proof}
%By Lemma \ref{Bi},
%\[  \pi_{B_i} = \frac{1/k}{3+k}+ (1-1/k) \frac{\pi_{B_{i-1}}}{3} >(1-1/k) \frac{\pi_{B_{i-1}}}{3} \]
%and by Lemma \ref{B1}
%\[  \pi_{B_1} = \frac{1/k}{3+k}+ (1-1/k) \frac{\pi_{C_2}}{3} > (1-1/k) \frac{\pi_{C_2}}{3} \]
%\end{proof}

%\begin{lemma}\label{bisumlower}
\[ \sum_{i=1}^k \pi_{B_i} > \sum_{i=1}^k (1-1/k)^i \frac{\pi_{C_2}}{3^i} = \frac{(\pi_{C_2}-\pi_{C_2}(\frac{1-1/k}{3})^k)}{(1-\frac{1-1/k}{3})}\]
%\end{lemma}
%By Lemma \ref{lowerbi} and 
This equation comes from viewing the previous equations as a finite geometric series.% the lemma is true. 

\subsubsection{Showing \texorpdfstring{$\pi_{C_1}, \pi_{C_2} \to \frac{2}{5}$}{C1 and C2 converge to 2/5}}

We will now evaluate the limit with our choice $\alpha = 1 - \frac{1}{k}$. We already know that as $k \to \infty$, $\pi_A \to 0$; now we show that $\pi_{C_1} \to 2/5$, $\pi_{C_2} \to 2/5$.

%Using Equation \ref{pi_a},
%We have for $\pi_A$:

%\begin{eqnarray*}
%\pi_A &<& \frac{k}{3^k} + \frac{3}{2k} 
%\end{eqnarray*}
 
%Therefore, as $k \to \infty$, $\pi_A \to 0$

%Now consider $\pi_N$,
%\[  \pi_N < 1/k = 1/k \]
%Therefore, as $k \to \infty$, $\pi_N \to 0$.

%\begin{lemma}\label{c1c2eq}
%As $k \to \infty$, $\pi_{C_1} - \pi_{C_2} = 0$. In particular, $\pi_{C_1}  = \pi_{C_2}$
%\end{lemma}
%This follows from Lemma \ref{C1vsC2}. \hfill \qed.

%\begin{lemma} \label{pi_c1andc2}
{\it Claim:} 
\[  \pi_{C_1} = \pi_{C_2} \to \frac{2}{5} \]
%\end{lemma}

\begin{proof}

We will show that $ \lim_{k \to \infty} \pi_{C_2} \le \frac25$ and that $ \lim_{k \to \infty}\pi_{C_2} \ge \frac25$

~\\

%$(\le)$

Let's start with
\[ \pi_{C_1} + \pi_{C_2} + \pi_A + \sum_{i=1}^{k} \pi_{B_i} =1.\]
and so
\[ \lim_{k \to \infty} \left[ \pi_{C_1} + \pi_{C_2} + \pi_A + \sum_{i=1}^{k} \pi_{B_i}\right] =1.\]

%By Lemma \ref{c1c2eq}, $\pi_{C_1} = \pi{C_2}$, by Lemma \ref{pi_a}, $\pi_A = 0$,  by Lemma \ref{pi_n}, $\pi_N = 0$, and by Lemma \ref{}, $\sum_{i=1}^{k} \pi_{B_i} > \frac{((C_2-\pi_{C_2}(\frac{1-1/k}{3})^k)}{(1-\frac{1-1/k}{3})}$

By (\ref{C2}) we can substitute
\begin{eqnarray*} 
\pi_{C_1} &=& \pi_{C_2}
\end{eqnarray*}
into the above:
\[ \lim_{k \to \infty} \left[ 2 \pi_{C_2} + \pi_A + \sum_{i=1}^{k} \pi_{B_i}\right] =1.\]
and by distributing the limits:

\[  2 \left[ \lim_{k \to \infty}  \pi_{C_2} \right] + \left[ \lim_{k \to \infty} \pi_A \right] + \left[ \lim_{k \to \infty} \sum_{i=1}^{k} \pi_{B_i}\right] =1.\]

%By (\ref{C2}) we can substitute
%\begin{eqnarray*} 
%\pi_{C_1} &=& \frac{1/k}{3+k} + (1-1/k)  \frac{\sum_{i=1}^{k} \pi_{B_i}}{3}+ \frac{(1-1/k)\cdot 5\pi_{C_2} }{6}
%\end{eqnarray*}
%into the above:
%\[ \lim_{k \to \infty} \left[ 2 \pi_{C_2} + \pi_A + \sum_{i=1}^{k} \pi_{B_i}\right] =1.\]
%May be needed later

%Thus, substituting with choices $(k, 1/k)  = (k, 1/k)$,
%\[\pi_{C_2} =  \frac{1/k}{3+k} + (1-1/k)\cdot \frac{\sum_{i=1}^{k} \pi_{B_i}}{3}+ \frac{(1-1/k)\cdot \pi_{C_2} \cdot5}{6} \] 
%Substituting $\pi_N = 0$, we get that
%\[\pi_{C_2} 
%> \frac{1/k}{3+k} + (1-1/k)\cdot \frac{\sum_{i=1}^{k} \pi_{B_i}}{3}+ \frac{(1-1/k)\cdot \pi_{C_2} \cdot5}{6} \] 

%\end{proof}

%Looking at the $B$ vertices,
%\[  \pi_{B_i} = \frac{1/k}{3+k}+ (1-1/k)\cdot \frac{\pi_{B_{i-1}}}{3} = \frac{1/k}{3+k}+ (1-1/k)\cdot \frac{\pi_{B_{i-1}}}{3}>(1-1/k)\cdot \frac{\pi_{B_{i-1}}}{3} \]

% For fixed i, For sufficiently large k
% \pi_{B_{i-1}}/3 > \frac{1}{3+k}
% 1/k \pi_{B_{i-1}}/3 > \frac{1/k}{3+k}
% \pi_{B_i} = \frac{1/k}{3+k}+ (1-1/k)\cdot \frac{\pi_{B_{i-1}}}{3} < 1/k \pi_{B_{i-1}}/3 + (1-1/k)\cdot \frac{\pi_{B_{i-1}}}{3} =  \pi_{B_{i-1}}/3 
% \sum_{i=1}^\ell \pi_{B_i} < \sum_{i=1}^{\ell} \pi_{C_2}/3^i
%Also, \sum_{i=1}^\ell \pi_{B_i} > \sum_{i=1}^{\ell} (1-1/k)^i \pi_{C_2}/3^i
% Altogether,  \sum_{i=1}^{\ell} \pi_{C_2}/3^i > \sum_{i=1}^\ell \pi_{B_i} > \sum_{i=1}^{\ell} (1-1/k)^i \pi_{C_2}/3^i

%And for $B_1$,
%\[  B_1 = \frac{1/k}{3+k}+ (1-1/k)\cdot \frac{\pi_{C_2}}{3} \]

Since $\pi_A \to 0$, We can find the total ranking as 
\[ \sum_{i=1}^k \pi_{B_i} + \pi_{C_1} + \pi_{C_2} \to 1.\]

%\begin{proof}
To find an upper bound for $\pi_{C_2}$ we can sum the Pagerank values in $C_2$, $C_1$ and the value of the $\pi_{B_i}$s that comes from $C_2$. This gives us:
\[ 1> \pi_{C_1} + \frac{((\pi_{C_2}-\pi_{C_2}\cdot(\frac{1-1/k}{3})^k)}{(1-\frac{1-1/k}{3})} \]

By taking the limit of both sides

\[ \ \lim_{k \to \infty}  1 \ge \lim_{k \to \infty} \left[   \pi_{C_1} + \frac{(\pi_{C_2}-\pi_{C_2} \cdot (\frac{1-1/k}{3})^k}{(1-\frac{1-1/k}{3})}\right] \]

\[  1 \ge \lim_{k \to \infty} \left[ \pi_{C_1} \right] + \lim_{k \to \infty} \left[ \frac{(\pi_{C_2}-\pi_{C_2} \cdot (\frac{1-1/k}{3})^k}{(1-\frac{1-1/k}{3})}\right] \]

%\[  1 \ge \lim_{k \to \infty} \left[ \pi_{C_1} \right] +   \frac{\lim_{k \to \infty} \pi_{C_2}- \lim_{k \to \infty} \left[ \pi_{C_2} \cdot (\frac{1-1/k}{3})^k\right]}{\lim_{k \to \infty} \left[ (1-\frac{1-1/k}{3})\right] } \]

Splitting the fraction and substituting $\pi_{C_2}$ in for $\pi_{C_1}$ 
\[ 1> \lim_{k \to \infty} (\pi_{C_2}) + \left[ \lim_{k \to \infty} \frac{\pi_{C_2}} {(1-\frac{1-1/k}{3})}\right] - \lim_{k \to \infty} \left[ \frac{\pi_{C_2} \cdot(\frac{1-1/k}{3})^k}{(1-\frac{1-1/k}{3})}\right] \]

Taking the limits
\[ 1 \geq \lim_{k \to \infty} \pi_{C_2} + \lim_{k \to \infty} \frac{\pi_{C_2}} {\frac{2}{3}}- 0 \]

And so by solving for $\lim_{k \to \infty} \pi_{C_2}$,
\[ 2/5 \geq \lim_{k \to \infty} \pi_{C_2} \]

%Therefore as $k \to \infty$, $1 \geq \pi_{C_2} +3\pi_{C_2} / 2 = 5/2 \pi_{C_2}$. Therefore as $k \to \infty$, $\pi_{C_2} \leq 2/5$. 
%~ $(\ge)$
We now find a lower bound for $\pi_{C_2}$.

Let's start with (\ref{C2}):
\[ 
\pi_{C_2} = \frac{1/k}{3+k} + (1-1/k)\cdot \frac{\sum_{i=1}^{k} \pi_{B_i}}{3}+ \frac{(1-1/k)\cdot \pi_{C_2} \cdot5}{6}
\]
By removing the first term, 
\[\pi_{C_2} >  (1-1/k) \frac{\sum_{i=1}^k \pi_{B_i}} {3}+ \frac{5 (1-1/k) \pi_{C_2} }{6} ) \]
Combining the $\pi_{C_2}$ terms,
\[\pi_{C_2}- \frac{ \pi_{C_2} \cdot5}{6} + \frac{5\pi_{C_2}}{6k} >   (1-1/k)\cdot\frac{\sum_{i=1}^k \pi_{B_i}} {3} \]
Simplifying
\[ \frac{\pi_{C_2}}{6}+\frac{5\pi_{C_2}}{6k} >   (1-1/k)\cdot\frac{\sum_{i=1}^k \pi_{B_i}} {3} \]
%Since $\frac{1/k}{3+k} + (1/k)/(2(k+2k+1))$ goes to $0$ as $k$ goes to $\infty$, then $\sum_{i=1}^k \pi_{B_i} \to \pi_{C_2} /2$.
Therefore,
%I need to rearrange this equation to find B
\begin{equation}\label{upperbi}
\sum_{i=1}^k \pi_{B_i} < \left( \frac{\pi_{C_2}}{6} +\frac{5\pi_{C_2}}{6k}\right)\cdot\frac{3}{(1-1/k)} 
\end{equation}
%and
%\[ \pi_{C_2} >  \frac{6k}{2k-1}(\frac{1/k}{3+k} + (1-1/k)\cdot\frac{\sum_{i=1}^k \pi_{B_i}} {3} + (1/k)/(2(k+2k+1))) \]
%I need to expand this part
By applying this, 
\[  \pi_{C_1} + \pi_{C_2} + \pi_A + \sum_{i=1}^k \pi_{B_i} =1\]

\begin{eqnarray*}1 &<& \pi_{C_2} + \pi_{C_2} + \frac{k}{3^k} + \frac{1}{2k}  \\&& +
\left( \frac{\pi_{C_2}}{6}+\frac{5\pi_{C_2}}{6k} \right) \frac{3}{(1-1/k)}
\end{eqnarray*}

Which will help us to find $\pi_{C_2}$ since
\begin{eqnarray*}1 &<& \pi_{C_2}  + \pi_{C_2} + \frac{\pi_{C_2}}{2(1-1/k)} + \frac{k}{3^k} + \frac{1}{2k} + \frac{5 \pi_{C_2}}{2k-2} 
\end{eqnarray*}
Or solving for $\pi_{C_2}$, 
\[ \pi_{C_2} > \frac{2}{5} - \frac{\pi_{C_2}}{2}(\frac{k}{k-1} - 1) -\frac{k}{3^k} - \frac{1}{2k} - \frac{5 \pi_{C_2}}{2k-2}
\]
Let $d =\frac{\pi_{C_2}}{2}(\frac{k}{k-1} - 1) + \frac{k}{3^k} + \frac{1}{2k} + \frac{5 \pi_{C_2}}{2k-2}$. We do this because as $k \to \infty, d \to 0$ and because $\pi_{C_2} > \frac{2}{5} - d$ which implies:

%And the limit of equation as $k \to \infty$ goes to $\pi_{C_2} + \pi_{C_2} +\pi_{C_2}/2 \geq 1$, which implies:

%\end{proof}
%Tidy this up
%\begin{lemma}
\[ \lim_{k \to \infty} \pi_{C_1} = \lim_{k \to \infty} \pi_{C_2} = \frac{2}{5}  \]
%\end{lemma}
\end{proof}

\subsubsection{Computing \texorpdfstring{$\| \pi_{\alpha=1-1/k} \|^2$}{the 2-norm of one Pagerank vector}} 

To compute $\| \pi_{\alpha=1-1/k} \|^2$, we are not necessarily concerned with what $\sum_{i=1}^k \pi_{B_i}$ is equal to, rather we are more concerned about what $\sum_{i=1}^k \pi_{B_i} ^2$ is equal to.% because that is that plus $\pi_{C_1} ^2$, and $\pi_{C_2} ^2$ is the Pagerank squared.

Using the fact that $\pi_{C_1} = \pi_{C_2}$ and $\pi_{C_2} \to 2/5$ and $\pi_{C_2} + \sum_{i=1}^k \pi_{B_i}$ goes to a geometric series we now have the following,
%which implies that the norm squared of the Pagerank when $\alpha=1-1/k$ goes to the same geometric series but with the initial term squared and the ratio squared, and with $\pi_{C_1} ^2$ added on to it.

\begin{lemma}
\[ \lim_{k \to \infty} \|\pi_{\alpha=1-1/k} \|_2^2 \to \frac{17}{50} \]
\end{lemma}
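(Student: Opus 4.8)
The plan is to split the squared norm over the four ``blocks'' of $\Gamma(k)$,
\[ \|\pi_{\alpha=1-1/k}\|_2^2 \;=\; \pi_{C_1}^2 + \pi_{C_2}^2 + \pi_A^2 + \sum_{i=1}^k \pi_{B_i}^2, \]
and evaluate the limit of each block separately. Three of the four blocks are already in hand: since $\pi_{C_1}=\pi_{C_2}\to 2/5$ we get $\pi_{C_1}^2+\pi_{C_2}^2 \to 2(2/5)^2 = 8/25$, and since $\pi_A\to 0$ (with the explicit bound $\pi_A < k/3^k + 3/(2(k+1))$ from (\ref{pi_a2})) we get $\pi_A^2\to 0$. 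So the entire statement reduces to proving $\sum_{i=1}^k \pi_{B_i}^2 \to 1/50$, after which the answer follows as $8/25 + 0 + 1/50 = 16/50 + 1/50 = 17/50$.

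For the $B_i$-sum, the key observation is that $\pi_{B_i}$ is a pure geometric sequence in $i$ up to a uniformly negligible additive term. Solving the linear recurrence $\pi_{B_i} = \frac{1/k}{3+k} + (1-1/k)\pi_{B_{i-1}}/3$ with initial value $\pi_{B_1} = \frac{1/k}{3+k} + (1-1/k)\pi_{C_2}/3$ in closed form gives
\[ \pi_{B_i} \;=\; r^i\,\pi_{C_2} \;+\; c\cdot\frac{1-r^i}{1-r}, \qquad r:=\frac{1-1/k}{3}, \quad c:=\frac{1/k}{3+k}, \]
so that $\pi_{B_i} = r^i\pi_{C_2} + \epsilon_i$ with $0\le \epsilon_i \le c/(1-r) = \frac{3}{(3+k)(2k+1)} = O(1/k^2)$, uniformly in $i$. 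Expanding the square,
\[ \sum_{i=1}^k \pi_{B_i}^2 \;=\; \pi_{C_2}^2\sum_{i=1}^k r^{2i} \;+\; 2\pi_{C_2}\sum_{i=1}^k r^i\epsilon_i \;+\; \sum_{i=1}^k \epsilon_i^2. \]
The leading term tends to $\pi_{C_2}^2\cdot\frac{(1/3)^2}{1-(1/3)^2} = \frac{4}{25}\cdot\frac18 = \frac1{50}$, using $\pi_{C_2}\to 2/5$, $r\to 1/3$, and $r^{2k}\to 0$ to replace the finite geometric sum by its limiting value $1/8$. The two remaining sums vanish: the cross term is bounded by $2\cdot 1\cdot(\max_i\epsilon_i)\cdot\frac{r}{1-r} = O(1/k^2)$, and the last by $k\cdot(\max_i\epsilon_i)^2 = O(1/k^3)$.

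Combining the four blocks yields $\lim_{k\to\infty}\|\pi_{\alpha=1-1/k}\|_2^2 = 8/25 + 0 + 1/50 = 17/50$, as claimed. The one step requiring care — the main obstacle — is making the approximation $\pi_{B_i}\approx r^i\pi_{C_2}$ quantitative \emph{uniformly in $i$} and interchanging the $k\to\infty$ limit with the lengthening sum over $i$; the closed-form solution of the recurrence together with the crude geometric dominations above settle both, since the per-term error is $O(1/k^2)$ while the tail $\sum_{i>m}r^{2i}$ is geometrically small uniformly in $k$.
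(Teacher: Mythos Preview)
Your proof is correct and follows essentially the same approach as the paper: both arguments isolate the main geometric term $\pi_{B_i}\approx r^i\pi_{C_2}$ with $r=(1-1/k)/3$, square and sum, and show the residual contribution vanishes. The only cosmetic difference is that you solve the linear recurrence in closed form and run a single limit with explicit $O(1/k^2)$ error control, whereas the paper sandwiches the quantity between matching upper and lower bounds; your packaging is a bit tidier but the content is the same.
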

\begin{proof}
As before, we will provide an lower bound and then an upper bound for  $\lim_{k \to \infty} \|\pi_{\alpha=1-1/k} \|_2^2.$

Let us begin with the lower bound. %As at the end of the previous proof, we will let $d =\frac{\pi_{C_2}}{2}(\frac{k}{k-1} - 1) + \frac{k}{3^k} + \frac{1}{2k} + \frac{5 \pi_{C_2}}{2k-2}.$

% \pi_{B_i} > (1-1/k)^i \frac{\pi_{C_2}}{3^i} 

\begin{eqnarray*}
\|\pi_{\alpha=1-1/k}\|_2^2 &=& \pi_A ^2 + \pi_{C_1}^2 + \pi_{C_2}^2 + \sum_i^k \pi_{B_i}^2  \\
&>& \pi_{A} ^2
+\pi_{C_1} ^2 + \pi_{C_2} ^2 +  \sum_{i=1}^k  (1-1/k)^i \frac{\pi_{C_2}}{3^i} \\
&=& \pi_{A} ^2
+\pi_{C_1} ^2 +\frac{\pi_{C_2} ^2 - \pi_{C_2}^2 (\frac{1-1/k}{3})^{2k}} {1-\left(\frac{1-1/k}{3}\right)^2}
\end{eqnarray*}
% &=& \pi_A ^2 + (\frac{2}{5}) ^2 
% %+ (1/(3k^2 +k) \cdot (1-1/k))^2
% +\frac{(\frac{2}{5})^2 - (\frac{2}{5})^2 ((\frac{1-1/k}{3})^2)^k}{\frac{8}{9}+\frac{2}{9k}-\frac{1}{9k^2}}
% - \frac{4d}{5} - d^2 - \frac{\frac{4d}{5}+(d)^2}{\frac{8}{9}+\frac{2}{9k}-\frac{1}{9k^2}}

%We want an upper bound for this sum
Where the first inequality follows from (\ref{Bi}). The second equality applies the finite geometric series.

So by applying the limit on both sides, we have 

\begin{eqnarray*} \lim_{k\to \infty} \|\pi_{\alpha=1-1/k}\|_2^2  &\ge& \lim_{k \to \infty} \left( \pi_{A} ^2
+\pi_{C_1} ^2 +\frac{\pi_{C_2} ^2 - \pi_{C_2}^2 (\frac{1-1/k}{3})^{2k}} {1-\left(\frac{1-1/k}{3}\right)^2} \right) \\ 
&=& \lim_{k \to \infty}  \pi_{A} ^2 
+ \lim_{k \to \infty}  \pi_{C_1} ^2 + \lim_{k \to \infty} \frac{\pi_{C_2} ^2 - \pi_{C_2}^2 (\frac{1-1/k}{3})^{2k}} {1-\left(\frac{1-1/k}{3}\right)^2} \\ 
&=& 0 + \frac{4}{25} + \frac{4 \cdot 9}{25 \cdot 8} 
\end{eqnarray*}
% This works because $\pi_{C_2}^2 + \sum_i^k \pi_{B_i}^2 > \frac{\pi_{C_2}^2 - (\pi_{C_2}^2 ((\frac{1-1/k}{3})^2)^k}{1-((1-1/k)/3)^2}$ %by \ref{bisumlower}
% and $\pi_{C_1} = \pi_{C_2} > \frac{2}{5} - d$. % by Lemma \ref{pi_c1andc2}.
% Since $d \to 0$, and $\frac{(\frac{2}{5})^2 - (\frac{2}{5})^2 ((\frac{1-1/k}{3})^2)^k}{\frac{8}{9}+\frac{2}{9k}-\frac{1}{9k^2}} \to \frac{9\cdot4}{8\cdot25}$ and $\pi_A \to 0$:
% \[  \lim_{k \to \infty} \|\pi_{\alpha=1-1/k}\|_2^2 \ge \frac{4}{25} + \frac{9\cdot4}{8\cdot25} = 17/50
% \]
To find an upper bound, we only need an upper bound for the $\pi_{B_i}$. We can observe that
\[
\pi_{B_i} = (1/3)^i \pi_{C_2} \left(1-\frac1k\right)^i + \sum_{n = 1}^{i-1} \frac{(1/3)^{i-n} (1- \frac{1}{k})^{i-n} \cdot \frac1k)}{3+k}.
\]
The first term above contains all of the probability mass that has gone through $C_2$ since it last jumped and the second term contains the probability mass that has not (i.e., it jumped onto vertex $B_j$ for $j<i$).

By overestimating the second term of the equation above with an infinite geometric series, it is bounded by $\frac{3/k}{2(3+k)}$. Ergo
\[
\pi_{B_i} < (1/3)^i \pi_{C_2} \left(1-\frac1k\right)^i + \frac{3/k}{2(3+k)}.
\]
However, we are concerned about $\pi_{B_i} ^2$ so by expanding we have
\[
\pi_{B_i} ^2 < (1/3)^{2i} \pi_{C_2}^2 (1-1/k)^{2i}  + (1/3)^i \pi_{C_2} (1-1/k)^i \frac{3/k}{2(3+k)} + \left (\frac{3/k}{2(3+k)}\right )^2.
\]
Since $(1/3)^i \pi_{C_2} (1-1/k)^i < 1$ we know that 
\[
\pi_{B_i} ^2 <  (1/3)^{2i} \pi_{C_2}^2 (1-1/k)^{2i}  + \frac{3/k}{2(3+k)} + (\frac{3/k}{2(3+k)})^2.
\]
%And so by taking the sum:

% \[
% \sum_{i=1}^k \pi_{B_i} ^2 < \sum_{i=1}^k \left[ (1/3)^{2i} \pi_{C_2}^2 (1-1/k)^{2i}  + \frac{3/k}{2(3+k)} + (\frac{3/k}{2(3+k)})^2
% \right] \]
So
\begin{eqnarray*}
&~& \|\pi_{\alpha=1-1/k}\|_2^2 \\ &=& \pi_{C_1}^2 + \left( \pi_{C_2}^2 + \sum_{i=1}^k \pi_{B_i}^2 \right) + \pi_A^2. \\
&<& \pi_{C_1}^2 + \left( \pi_{C_2}^2 + \sum_{i=1}^k \left[(1/3)^{2i} \pi_{C_2}^2 (1-1/k)^{2i}  + \frac{3/k}{2(3+k)} + (\frac{3/k}{2(3+k)})^2 \right] \right) + \pi_A^2. \\
&=& \pi_{C_1}^2 + \left( \pi_{C_2}^2 + \sum_{i=1}^k (1/3)^{2i} \pi_{C_2}^2 (1-1/k)^{2i} \right) + k \frac{3/k}{2(3+k)} + k (\frac{3/k}{2(3+k)})^2  + \pi_A^2. \\
&<& \pi_{C_1} ^2 + \frac{\pi_{C_2}^2}{1-(1/9)} + k\left (\frac{3/k}{2(3+k)} + \left(\frac{3/k}{2(3+k)}\right)^2\right) + \pi_A^2. \\
\end{eqnarray*}
where the first inequality follows using the previous estimate of $\pi_{B_i}$, the second inequality follows from overestimating the geometric sum as an infinite series.
%follows by viewing the parathetical term is a geometric series beginning at $\pi_{C_2}$ plus a term which does not depend on $i$.

%We need to get the solution vector for $\|\pi_{\alpha=1-1/k}\|_2^2$ which we can overestimate as $\pi_{C_1} + \pi_{C_2} + \sum_{i=1}^k \pi_{B_i} +\pi_A$

So by applying the limit, we have

\[  \lim_{k \to \infty} \|\pi_{\alpha=1-1/k}\|_2^2 \le \lim_{k \to \infty} \left[\frac{4}{25} + \frac{4 \cdot 9}{25 \cdot 8} + \frac{3}{2(3+k)} + \frac{9/k}{4(3+k)^2}  + \pi_A^2 \right ]
\]
which goes to 
\[ \frac{4}{25} + \frac{4 \cdot 9}{25 \cdot 8}  = \frac{17}{50}
\]
as $k \to \infty$.
\end{proof}
% important fact about this graph is that when $\alpha < 1$, the probability mass is concentrated in $\pi_{C_1}, \pi_{C_2}$, and the  $\pi_{B_i}$s, in particular, the probability mass of $A$ goes to $0$. %I like this but not where it is
%Which when added to the value that A has when $\alpha=1$ implies that if the first vertex listed is $A$, the $\pi_{\alpha=1}=(1, 0, 0, \ldots )$. %and if the next vertices are $\pi_{C_1}, \pi_{C_2}$ followed by the $\pi_{B_i}$s then $\pi_{\alpha_2} \to (0, \frac{2}{5}, \frac{2}{5}, \ldots )$ 

\subsubsection{Computing \texorpdfstring{$\|\pi_{\alpha=1}-\pi_{\alpha=1-1/k}\|_2$}{the norm of the difference}}

Since when $\alpha=1$ vertex $\pi_A = 1$, and $\alpha = 1 - 1/k$, $\pi_A \to 0$, the sum
\[ \|\pi_{\alpha=1}-\pi_{\alpha=1-1/k}\|_2 = \sum_{i=1}( \pi_{\alpha=1, v} - \pi_{\alpha=1-1/k, v} )^2
\]
has either $\pi_{\alpha=1, v} \to 0 $ or $\pi_{\alpha=1-1/k, v} \to 0$, so \[  \lim_{k \to \infty} \|  \pi_{\alpha=1-1/k}-\pi_{\alpha=1} \|^2 =  \lim_{k \to \infty} \| \pi_{\alpha=1-1/k}\|^2 +\lim_{k \to \infty} \| \pi_{\alpha=1} \|^2 . \]
We have
\[  \lim_{k \to \infty} \| \pi_{\alpha=1-1/k}-\pi_{\alpha=1} \| \to \sqrt{\|\pi_{\alpha=1-1/k}\|^2 + \|\pi_{\alpha=1}\|^2} \to  \sqrt{\frac{17}{50} + 1} = \sqrt{\frac{67}{50}} \] 
\hfill $\triangle$

Indeed the deceiving graph from Figure \ref{fig:alphaplot} shows $\| \pi_{\alpha=1-1/k}-\pi_{\alpha=1} \| \to \sqrt{\frac{67}{50}} \approx 1.1575$ as $\alpha \to 1$. %Note how for alpha close to 1 the power method can miss the subtle changes in the graph at each step 
%Modified this paragraph above.
\begin{figure}[ht]
    \centering
    \includegraphics[width=4in]{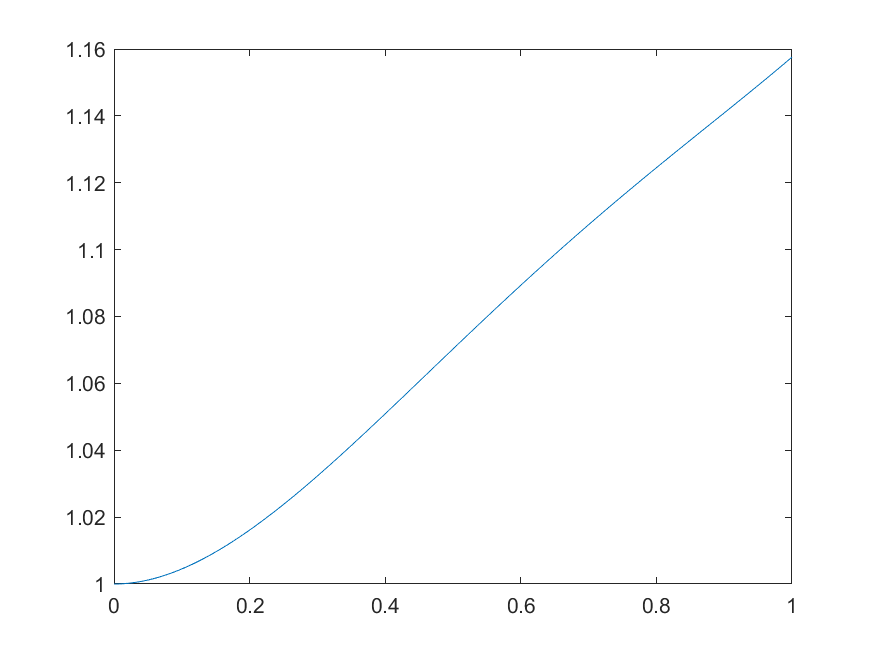}

    \label{fig:alphaplot}
    \caption{A plot of $x =  \alpha$ versus $y=||\pi_{\alpha = {1}}-\pi_{\alpha = x}||$ using the construction from Theorem \ref{mainresult}, $\Gamma(k)$, for sufficiently large $k$. The resolution of the plot is unable to capture that $|\pi_{\alpha = {1}}-\pi_{\alpha = x}|| \to 0$ as $x \to 1$. }
\end{figure}

\section{The Open Problem} \label{sec:open}

Originally, in this study, our construction had only one ``$C$'' vertex. This would lead to a limiting value of $\sqrt{\frac{4}{3}} = \sqrt{1+\left(\frac{1}{2}\right)^2 + \left(\frac{1}{2^2}\right)^2 + \ldots }  \approx 1.15470$ which is just a tad less than $\sqrt{\frac{67}{50}} \approx 1.15758$. 

This would seem to suggest that adding more ``$C$'' vertices (which we call the set C) would be beneficial as in Figure \ref{fig:generic}. This would result in a larger total probability mass among the $C_i$. However, splitting the probability mass among more vertices would result in a smaller 2-norm when squaring. For instance, if $\pi_{\alpha_1}$ was concentrated at one vertex whereas $\pi_{\alpha_2}$ was concentrated on more than two other vertices equally, the resulting difference would be at most 
$\sqrt{1 + 3 \frac{1}{3^2}} = \sqrt{\frac{4}{3}}$, which is worse than our result. 

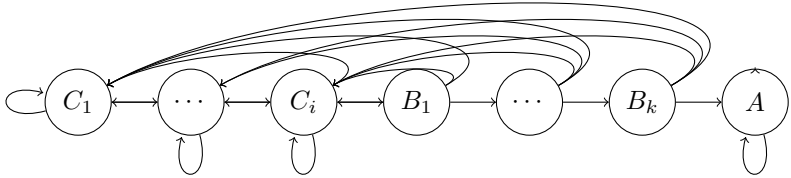
\begin{figure}
    \centering

    \begin{tikzpicture}
    % a node set with absolute coordinates
    \node[state] (a) at (10,4) {$A$};
    \node[state] (b) at (8.5,4) {$B_k$};
    \node[state] (c) at (7,4) {$\cdots$};
    \node[state] (d) at (5.5,4) {$B_1$};
    \node[state] (h) at (4,4) {$C_i$};
    \node[state] (e) at (2.5,4) {$\cdots$};
    \node[state] (f) at (1,4) {$C_1$};

    %\node[state] (i) at (1,1.5) {$G$};
 %   \node[state] (j) at (1,3) {$F$};
  %  \node[state] (k) at (1,4.5) {$E$};
   % \node[state] (l) at (1,6) {$\vdots$};

    % Directed edge
    \path[<-] (a) edge (a);
    \path[->] (b) edge (a);
    \draw[<-] (b) edge (c);
    \draw[<-] (c) edge (d);
    \draw  (e) edge [<-, out=30, in=30] (b);
    \draw  (f) edge [<-, out=30, in=30] (c);
    \draw  (e) edge [<-, out=30, in=30] (c);
    \draw  (f) edge [<-, out=30, in=30] (h);
    \draw  (h) edge [<-, out=30, in=30] (b);
    \draw  (h) edge [<-, out=30, in=30] (c);
    \draw  (h) edge [<-, out=30, in=30] (d);
    
    \draw[->] (e) edge (f);
    \draw (f) edge [<-, out=30, in=30]  (b);
    \draw (f) edge [<-, in=30, out=30 ]  (d);
    \draw[<-] (d) edge (h);
    \draw[<-] (e) edge (h);
    \draw[->] (e) edge (h);
    \draw[<-] (e) edge (f);
%    \draw[<-] (f) edge (h);
 %   \draw[<-] (f) edge (i);
    %\draw[<-] (f) edge (j);
%    \draw[<-] (f) edge (k); 
 %   \draw[<-] (f) edge (l); 
    \path (a) edge [loop below] (a);
    \path (e) edge [loop below] (e);
    \path (h) edge [loop below] (h);
    \path (f) edge [loop left] (f);
    \draw[->] (d) edge  (h);
\end{tikzpicture}

    \caption{A more general construction with more $C$ vertices. The optimal value of $\| \pi_{\alpha_1} - \pi_{\alpha_2} \| $ occurs for two $C$ vertices.  }
    \label{fig:generic}
\end{figure}

Consider a similar construction to the path we had earlier but with a varying number of vertices in $C$. This general construction is the same as the previous version except we can add a fixed amount of vertices that are copies (in that they have the same in-arcs and out-arcs) of $C_1$ except they do not have an arc to $B_1$ and they have an arc to and from every other vertex in $C$.

We will give an informal argument why having two vertices in $C$ is the best for this more general construction. As in the proofs above, for $\alpha$ close to (but not equal to) $1$, the probability mass always sums to $1$ and approximately forms a geometric series, so we have
\[ 1 \approx (m -1)  \pi_{C_i} + \frac{\pi_{C_i}}{1- \frac{1}{m + 1}}\]
where $\pi_{C_i}$ is the Pagerank each vertex in any such vertex in $C$  and $m$ is the number of such vertices. By solving for $\pi_{C_i}$ we have \[\pi_{C_i} \approx \frac{m}{1 + m^2}.\] Based on this result, we can make a formula for $\| \pi \|_{2} ^{2}$
\begin{eqnarray*}
\| \pi \|_2^2  &\approx& m \pi_{C_i}^2 + \sum_{i=1}^{k} \pi_{B_i}^2    \\
 &\approx& m \pi_{C_i}^2 + \sum_{i=1}^{k} \frac{1}{(m+1)^{2i}} \pi_{C_i}^2  \\
&\approx&  \pi_{C_i}^2  \left[ m-1 + \frac{1 }{1-\frac{1}{(1+m)^2}} \right ]  \\
&=&  \frac{m^2}{\left(m^2+1\right)^2} \left[ m-1 + \frac{1 }{1-\frac{1}{(1+m)^2}} \right ] \\ 
&=& \frac{m^4+2 m^3+m}{(m+2) \left(m^2+1\right)^2}
\end{eqnarray*}

As a result, we have that 

\[ \|\pi_{\alpha_1} - \pi_{\alpha_2} \| \approx \sqrt{1+ \frac{m^4+2 m^3+m}{(m+2) \left(m^2+1\right)^2 }} \]
%$f(m)$ for how much the non $A$ vertices increase the square of the two norm, $f(m) \approx (m+1)^2 + \frac{\pi_C_{i} ^2}{1-\frac{1}{(1+m)^2}}$. Substituting in for $\pi_C_i$ we have $f(m) \approx (m+1)^2 + \frac{ \left( \frac{1}{(m+1)+ \frac{1}{1-\frac{1}{m+1}}} \right) ^2}{1-\frac{1}{(1+m)^2}}$.
This function reaches a maximum of approximately $\sqrt{1.360390}$ at approximately $m=1.445036$, but since this problem only accepts discrete answers the optimum is either at $m=1$ or $m=2$. For $m=1$ we have $\sqrt{\frac{4}{3}}$, and for $m=2$, we have $\sqrt{\frac{67}{50}}$ slightly higher as shown in Figure \ref{fig:fgraph}. 

\begin{figure}[ht]
    \centering
    \includegraphics[width=4in]{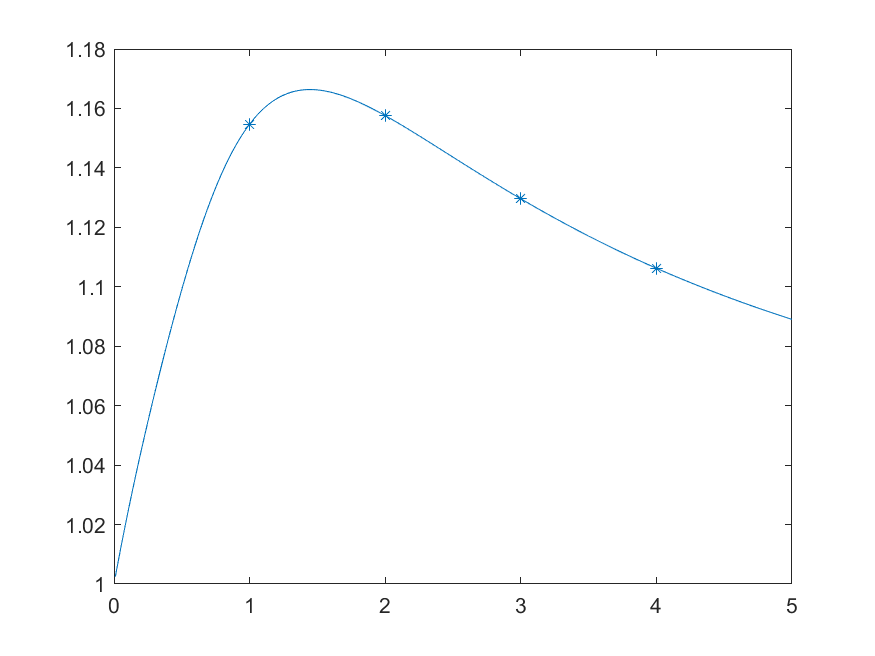}
    \label{fig:fgraph}
    \caption{
    Plot of $f(x)= \sqrt{1+ \frac{x^4+2 x^3+x}{(x+2) \left(x^2+1\right)^2}}$ with the values for $x=1, 2, \ldots$ indicated. For positive integral values $f(x)$ is maximized at  $x=2$ indicating that 2 ``C'' vertices provides for an optimal construction.
    }
\end{figure}

We can also ask about the supremum values of $\| \pi_{\alpha_1} - \pi_{\alpha_2} \|_p $ for different $p \ge 1$
Our construction 
%is pretty convincing because we have a good grasp on what the
achieves the supremum values for both the $1$- and the $\infty$-norms, %and this construction acheives both. 
The maximum that can be possibly achieved for the $\infty$ norm for the difference between two Pagerank vectors is 1, and these constructions achieve that because when $\alpha = 1$, $\pi_A = 1$ and when $\alpha = 1-1/k$, $\pi_A \to 0$. Similarly, the maximum that can be possibly achieved for the $1$-norm for the difference between two Pagerank vectors is 2, and our constructions achieve that because when $\alpha = 1$, $\pi_A = 1$ and when $\alpha = 1-1/k$, $\pi_A \to 0$ and so the probability mass is distributed among the other vertices and so $\| \pi_{\alpha_1} - \pi_{\alpha_2} \|_1 \to 1 $. However, it is remains unclear if our construction is optimal for $1 < p < \infty$.

% When you replot, add dots for the interger values

%As such, if there is an example in excess of $\sqrt{\frac{67}{50}}$, it would likely have to utilize a different way to capture sufficient probability mass from a long path but in a nonuniform manner. 

We would hope the argument above is convincing that $\sqrt{\frac{67}{50}}$ is perhaps the best possible for the 2-norm, but this is far from a proof! In fact, it highlights why optimization is difficult for discrete combinatorial objects. There is much work on optimizing a graph based on eigenvalues and/or eigenvectors \cite{aksoy2016extreme, tait2019colin, tait2017three}, and even among them, there are several cases where the optimal graph follows a canonical construction in small cases but becomes different in larger cases \cite{kostochka2008ks}. 

%\printbibliography

\bibliographystyle{siamplain}
\bibliography{pagerank}

\end{document}